\renewcommand{\theequation}{\arabic{section}.\arabic{equation}}
\def\vbar{\mathchoice{\vrule height6.3ptdepth-.5ptwidth.8pt\kern- .8pt}
{\vrule height6.3ptdepth-.5ptwidth.8pt\kern-.8pt} {\vrule
height4.1ptdepth-.35ptwidth.6pt\kern-.6pt} {\vrule
height3.1ptdepth-.25ptwidth.5pt\kern-.5pt}}
\newcommand{\QC}{{\rm QC}}
\newcommand{\GDer}{{\rm GDer}}
\newcommand{\ZDer}{{\rm ZDer}}
\newcommand{\QDer}{{\rm QDer}}
\newcommand{\Der}{{\rm Der}}
\newcommand{\R}{\mathcal{R}}
\newcommand{\g}{\mathfrak{g}}
\def\<{\langle}
\def\>{\rangle}
\def\a{\alpha}
\def\b{\beta}
\def\f{\phi}
\def\p{\psi}
\def\c{\cdot}
\def\g{\gamma}
\def\l{\lambda}
\def\o{\otimes}
\def\pr{\partial}
\def\R{\mathcal{R}}
\theoremstyle{definition}
\newtheorem{df}{Definition}[section]
\theoremstyle{plain}
\newtheorem{thm}{Theorem}[section]
\newtheorem{cor}{Corollary}[section]
\newtheorem{rem}{Remark}[section]
\newtheorem{prop}{Proposition}[section]
\newtheorem{lem}{Lemma}[section]
\theoremstyle{definition}
  \theoremstyle{definition}
    \newtheorem{exa}[thm]{Example}
\newcommand{\bmx}{\begin{pmatrix}}
\newcommand{\emx}{\end{pmatrix}}
\date{}
\begin{document}
\title{ \bf Cohomology and conformal derivations of BiHom-Lie conformal superalgebras }
\author{\bf Taoufik Chtioui}
\author {Taoufik Chtioui 
 \footnote { Corresponding author,  E-mail: chtioui.taoufik@yahoo.fr} \\
{\small  University of Sfax, Faculty of Sciences Sfax,  BP
1171, 3038 Sfax, Tunisia}}
 \maketitle
\begin{abstract}
 In this paper,  we introduce  the notion of BiHom-Lie conformal superalgebras. We develop its  representation theory and define the cohomology group with coefficients in a module. Finally, we introduce conformal derivations of BiHom-Lie conformal superalgebras and study some of their properties.
\end{abstract}

{\bf Key words}:  BiHom-Lie conformal superalgebra, representation, cohomology,      conformal derivation.\\
 {\bf MSC(2010)} 17A30, 17B45, 17D25, 17B81
\tableofcontents
\numberwithin{equation}{section}
%%%%%%%%%%%%%%%%%%%%%%%%%%%%%%%%%%%%%%%%%%%
\section*{Introduction}
%%%%%%%%%%%%%%%%%%%%%%%%%%%%%%%%%%%%%%%%%%%
\def\theequation{0. \arabic{equation}}
\setcounter{equation} {0}

The notion of Lie conformal (super)algebras are introduced by Kac in \cite{Kac98} in which he gave  an axiomatic description of the singular part of the operator product expansion of chiral fields in conformal field theory. On the other hand,  it is a useful tool to study vertex (super)algebras and has many applications in the theory of Lie superalgebras. Moreover, 
 It is equivalent to the notion of a linear Hamiltonian operator introduced in  \cite{gelfand}, 
In \cite{Zhao2017}, Zhao et al.  developed deformation of Lie conformal superalgebras and   introduced derivations of  Lie conformal superalgebras and study their properties.

In \cite{Ammar2010}, authors  introduced the notion of Hom-Lie superalgebras and 
they gave a classification of Hom-Lie admissible superalgebras.  
Later,  Makhlouf et al. studied the representation and the cohomology of Hom-Lie superalgebras in  \cite{Ammar2013} and calculated the derivations and the second cohomology group of $q$-deformed Witt superalgebra. 
In \cite{Yuan17}, Yuan introduced  the notion of Hom-Lie conformal superalgebra and proved that a Hom-Lie conformal superalgebra is equivalent to a Hom-Gel'fand-Dorfman superbialgebra.
In \cite{Zhao2016},  the authors developed cohomology theory of Hom-Lie conformal algebras, discussed  some applications to the study of deformations of regular Hom-Lie conformal algebras and  introduced  derivations of multiplicative Hom-Lie conformal
algebras.
Motivated by these results, authors, in \cite{hom lie conf super},  introduced  the notion of representation theory of Hom-Lie conformal superalgebras and  discuss the cases of adjoint representations. Furthermore,  they developed  their  cohomology group and discuss
some applications to the study of deformation theory. 

Motivated by a categorical study
of Hom-algebra and new type of categories, the authors  introduced, in \cite{bihom-lie},
a generalized algebraic structure dealing with two commuting multiplicative linear maps, called BiHom-algebras including BiHom-associative algebras and BiHom-Lie algebras.
Recently,  Zhao, Yuan and Chen  developed the cohomology  and deformation theory 
of BiHom-Lie conformal algebras and investigated  the notion of  conformal derivations of  BiHom-Lie conformal algebras  in \cite{bihom lie conf alg}.

The main goal of the present work  is to introduce the notion of BiHom-Lie conformal superalgebras, study their cohomology theory and investigate the notion of generalized conformal derivations. 

The paper is organized as follows.
In section 1, we recall some basic definitions and results about the notion of Lie conformal (super)algebras. 
In Section 2,  we introduce  the notion  BiHom-Lie conformal superalgebras illustrated by some examples  give some related results. 
In Section 3, we develop  representation and cohomology theory of BiHom-Lie conformal superalgebras.  
Section 4 is devoted to the study  of derivations and  generalized derivations of BiHom-Lie conformal superalgebras and  their properties.

%%%%%%%%%%%%%%%%%%%%%%%%%%%%%%%%%%%%%%
\section{Preliminaries}
%%%%%%%%%%%%%%%%%%%%%%%%%%%%%%%%%%%%%%
\def\theequation{\arabic{section}.\arabic{equation}}
\setcounter{equation} {0}

Throughout the paper, all algebraic systems are supposed to be over a field $\mathbb{C}$,
of characteristic $0$ and denote by $\mathbb{Z}_{+}$ the set of all nonnegative integers and by $\mathbb{Z}$ the set of all integers.

Let $V$ be a superspace that is a $\mathbb{Z}_2$-graded linear space with a direct sum $V = V_0 \oplus V_1$. The elements
of $V_j , j =\{0,1\}$, are said to be homogenous and of parity $j$. The parity of a homogeneous element $x$
is denoted by $|x|$. Throughout what follows, if $|x|$ occurs in an expression, then it is assumed that $x$ is homogeneous and that the expression extends to the other elements by linearity.

\begin{df}
A Lie conformal superalgebra $\R$ is a left $\mathbb{Z}_{2}$-graded $\mathbb{C}[\partial]$-module, and for any $n\in\mathbb{Z}_{\geq0}$ there is a
family of $\mathbb{C}$-linear  $n$-products from $\R\otimes\R$ to $\R$ satisfying the following conditions
\begin{enumerate}
\item[$(\rm C0)$]  For any $a,b\in\R$, there is an $N$ such that $a_{(n)}b=0$ for $n\gg N$,
\item[$(\rm C1)$]  For any $a,b\in\R$ and $n\in\mathbb{Z}_{\geq0}$, $(\partial a)_{(n)}b=-n a_{(n)}b$,
\item[$(\rm C2)$]  For any $a,b\in\R$ and $n\in\mathbb{Z}_{\geq0}$, $$a_{(n)}b=-(-1)^{|a||b|}\sum_{j=0}^{\infty}(-1)^{j+n}\frac{1}{j!}\partial^j(b_{(n+j)}a),$$
\item[$(\rm C3)$]  For any $a,b,c\in\R$ and $m,n\in\mathbb{Z}_{\geq0}$,
$$a_{(m)}(b_{(n)}c)=\sum_{j=0}^{m}(_{j}^{m})(a_{(j)}b)_{(m+n-j)}c+(-1)^{|a||b|}b_{(n)}(a_{(m)}c).$$
\end{enumerate}
(Convention: $a_{(n)}b = 0$ if  $n < 0$). Note that if we define $\lambda$-bracket $[-_{\lambda}-]$:
\begin{eqnarray}
[a_\lambda b]=\sum_{n=0}^{\infty}\frac{\lambda^n}{n!}a_{(n)}b,\, a,b\in\R.
\end{eqnarray}
That is, $\R$ is a Lie conformal superalgebra if and only if $[-_{\lambda}-]$ satisfies the following axioms
\begin{eqnarray*}
&&(\rm C1)_{\lambda}\ \ \ \ {\rm  Conformal\ sesquilinearity}:\ \, [(\partial a)_\lambda b]=-\lambda[a_\lambda b];\\
&&(\rm C2)_{\lambda}\ \ \ \ {\rm Skew-symmetry}:     \ \ \ \ \ \ \ \ \ \ \ \, [a_\lambda b]=-(-1)^{|a||b|}[b_{-\partial-\lambda}a];\\
&&(\rm C3)_{\lambda}\ \ \ \ {\rm Jacobi\ identity}:  \ \ \ \ \ \ \ \ \ \ \ \ \ \ \  \, [a_\lambda[b_\mu c]]=[[a_\lambda b]_{\lambda+\mu}c]+(-1)^{|a||b|}[b_\mu[a_\lambda c]].
\end{eqnarray*}
\end{df}

Similarly, an associative conformal  superalgebra is a  left $\mathbb{Z}_{2}$-graded $\mathbb{C}[\partial]$-module$\R$  equipped with a $\mathbb{C}$-linear map $\c_\l : \R\otimes \R \to \R[\l]$ such that $\R_{i\l} R_j \subseteq \R_{i+j}[\l]$ and
\begin{eqnarray*}
&&(\rm C1)_{\lambda}\ \ \ \ {\rm  Conformal\ sesquilinearity}:\ \, [(\partial a)_\lambda b]=-\lambda[a_\lambda b];\\
&&(\rm C2)'_{\lambda}\ \ \ \ {\rm associativity}:     \ \ \ \ \ \ \ \ \ \ \ \,
(a_\l b)_{\l+\mu} c=a_\l (b_\mu c).
\end{eqnarray*}
$\R$ is called finite if it is finitely generated as a $\mathbb{C}[\pr]$-module.
\begin{prop}
Let $(\R,\c_\l)$ be an associative conformal superalgebra. Then $(\R,[\c_\l \c])$ is a Lie conformal superalgebra, where for any homogenous elements $a,b \in \R$
$$[a_\l b]= a_\l b- (-1)^{|a||b|} b_{-\l -\pr} a.$$
\end{prop}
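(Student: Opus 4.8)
The statement is the conformal, super analogue of the classical fact that an associative algebra becomes a Lie algebra under the graded commutator, so the plan is simply to verify the three defining axioms $(\mathrm{C1})_\lambda$, $(\mathrm{C2})_\lambda$, $(\mathrm{C3})_\lambda$ of a Lie conformal superalgebra for the bracket $[a_\lambda b]=a_\lambda b-(-1)^{|a||b|}b_{-\lambda-\partial}a$. Since $\cdot_\lambda$ takes values in $\R[\lambda]$, so does $[-_\lambda-]$, and the underlying $\mathbb{C}[\partial]$-module structure is unchanged, so the locality/polynomiality condition is inherited. The three ingredients I would lean on throughout are: the two conformal sesquilinearity relations $(\partial a)_\lambda b=-\lambda(a_\lambda b)$ and $a_\lambda(\partial b)=(\partial+\lambda)(a_\lambda b)$; associativity $(a_\lambda b)_{\lambda+\mu}c=a_\lambda(b_\mu c)$; and the fact that the conformal reflection $\mu\mapsto-\lambda-\partial$, with $\partial$ acting on coefficients, is an involution.

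For $(\mathrm{C1})_\lambda$ I would compute $[(\partial a)_\lambda b]$ termwise. The first summand gives $(\partial a)_\lambda b=-\lambda(a_\lambda b)$ at once. For the second summand I need $b_{-\lambda-\partial}(\partial a)$: writing $b_\mu(\partial a)=(\partial+\mu)(b_\mu a)$ by right sesquilinearity and then substituting $\mu\mapsto-\lambda-\partial$, so that $\partial+\mu\mapsto-\lambda$, yields $b_{-\lambda-\partial}(\partial a)=-\lambda\,b_{-\lambda-\partial}a$. Combining gives $[(\partial a)_\lambda b]=-\lambda(a_\lambda b)+(-1)^{|a||b|}\lambda\,b_{-\lambda-\partial}a=-\lambda[a_\lambda b]$. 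The one delicate point is that the substitution must be performed on the expansion $b_\mu a=\sum_n\frac{\mu^n}{n!}b_{(n)}a$ with $\partial$ acting on the coefficients, after which a short index shift exhibits the collapse $\partial+\mu\mapsto-\lambda$.

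Skew-symmetry $(\mathrm{C2})_\lambda$ is essentially forced by the shape of the commutator. Starting from $[b_\mu a]=b_\mu a-(-1)^{|a||b|}a_{-\mu-\partial}b$, I multiply by $-(-1)^{|a||b|}$ and substitute $\mu\mapsto-\lambda-\partial$. The first resulting term is $-(-1)^{|a||b|}b_{-\lambda-\partial}a$, precisely the second term of $[a_\lambda b]$; the second term is the double reflection of $a_\nu b$, and since $-(-\lambda-\partial)-\partial=\lambda$ and the reflection is involutive, it collapses to $a_\lambda b$. Hence $-(-1)^{|a||b|}[b_{-\lambda-\partial}a]=a_\lambda b-(-1)^{|a||b|}b_{-\lambda-\partial}a=[a_\lambda b]$. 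The only thing to justify carefully here is the involutivity of the reflection applied to the coefficients of $a_{-\mu-\partial}b$.

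The Jacobi identity $(\mathrm{C3})_\lambda$ is the substantial step and the main obstacle. I would expand each of $[a_\lambda[b_\mu c]]$, $[[a_\lambda b]_{\lambda+\mu}c]$ and $(-1)^{|a||b|}[b_\mu[a_\lambda c]]$ by the commutator definition, obtaining four terms on the left and eight on the right, each a triple $\lambda$-product of $a,b,c$ in some order carrying some formal subscript. Using associativity together with the substitution/sesquilinearity manipulations of the previous steps, I would bring every term into the canonical form $x_\alpha(y_\beta z)$ and compare coefficients monomial by monomial. As in the purely algebraic super case, the two redundant monomials, namely the one of type $b_\mu(a_\lambda c)$ and the one of type $c_{-\lambda-\mu-\partial}(a_\lambda b)$, appear with opposite signs on the right and cancel, while the remaining four monomials match the left-hand side once the Koszul signs $(-1)^{|a||b|}$, $(-1)^{|a||c|}$, $(-1)^{|b||c|}$ are tracked. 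I expect the principal difficulty to lie exactly in this bookkeeping: not the signs alone, which mirror the classical commutator computation, but verifying that the formal subscripts built from $\lambda,\mu,-\lambda-\partial,-\mu-\partial,-\lambda-\mu-\partial$ and their nestings coincide on the surviving monomials and cancel on the vanishing ones, which forces repeated, careful use of associativity and of the involutivity of the reflection.
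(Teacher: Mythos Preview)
The paper states this proposition without proof, treating it as a standard preliminary fact about associative conformal superalgebras. Your proposal gives a correct and complete outline of the standard verification: checking $(\mathrm{C1})_\lambda$ via the two sesquilinearity relations and the substitution $\mu\mapsto-\lambda-\partial$, deducing $(\mathrm{C2})_\lambda$ from the involutivity of that reflection, and reducing $(\mathrm{C3})_\lambda$ to a twelve-term comparison in which associativity and the Koszul sign rule force the required cancellations. There is nothing to compare against on the paper's side, and your argument stands on its own.
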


\begin{exa}
Let $\R$ be a Lie conformal superalgebra and
let $B$ be a commutative associative (ordinary) superalgebra.
Then $\R\otimes B$ carries a Lie conformal superalgebra structure defined
as follows. The $\mathbb{C}[\pr]$-module structure is given by
$\pr(r\otimes b)= (\pr r)\otimes b$ ($r\in \R,\; b\in B$), and
the $\l$-bracket by
\begin{equation*}
[(r\otimes b)_{\l}(r' \otimes b')]=(-1)^{|b||r'|}
[r_{\l}r']\otimes(bb').
\end{equation*}
Notice that if $\R$ is finite and $B$ is finite-dimensional, then $\R
\otimes B$
is also finite.
\end{exa}

\begin{df}
A Hom-Lie conformal superalgebra $R=R_{\bar{0}}\oplus R_{\bar{1}}$ is a  $\mathbb{Z}_2$-graded $\mathbb{C}[\partial]$-module equipped with an even linear endomorphism
$\a$ such that $\a\partial=\partial\a$, and  a $\mathbb{C}$-linear map
\begin{eqnarray*}
R\o R\rightarrow \mathbb{C}[\lambda]\o R,  ~~~a\o b\mapsto [a_\lambda b]
\end{eqnarray*}
such that $[R_{i \lambda}R_{j}]\subseteq R_{i+ji}[\lambda]$, $i,j \in \mathbb{Z}_2$, and the following axioms hold for $a,b,c\in R$
\begin{eqnarray}
&&[\partial a_{\lambda}b] =-\lambda[a_\lambda b],[ a_{\lambda}\partial b] =(\partial+\lambda)[a_\lambda b],\\
&&[a_{\lambda}b]=-(-1)^{|a||b|}[b_{-\lambda-\partial}a],,\\
&&[\a(a)_\lambda[b_\mu c]]=[[a_{\lambda}b]_{\lambda+\mu}\a(c)]+(-1)^{|a||b|}[\a(b)_{\mu}[a_{\lambda}c]].
\end{eqnarray}
\end{df}

\begin{df}
 A BiHom-Lie conformal algebra is a  BiHom-conformal algebra $(A,[\c_{\lambda}\c],\a,\b)$
 such that the following axioms hold for any $a,b,c\in A$
\begin{align}
&\a([a_\lambda b]) = [\a(a)_\lambda \a(b)], \b([a_\lambda b]) = [\b(a)_\lambda \b(b)]\\
&[\b(a)_{\lambda}\a(b)]=-[\b(b)_{-\lambda-\partial}\a(a)],\\
&[\a\b(a)_\lambda[b_\mu c]]=[[\b(a)_{\lambda}b]_{\lambda+\mu} \b(c)]+[\b(b)_{\mu}[\a(a)_{\lambda}c]].
\end{align}
In particular, if $\a,\b$ are algebra
isomorphisms, then $(A,[\c_{\lambda}\c],\a,\b)$ is called regular.
\end{df}

\begin{df}\label{modulebihomlie conformal}
A module $(V, \f, \p)$ over a BiHom-Lie conformal algebra $(A[\c_\l], \a,\b)$ is a $\mathbb{C}[\partial]$-
module endowed with two commuting $\mathbb{C}$-linear maps $\f, \p$ and a $\mathbb{C}$-bilinear map $A\o V\rightarrow V[\lambda], a\o v\mapsto a_\lambda v$,
such that for $a,b\in A, v\in V$:
 \begin{align}
&\p \circ \partial =\partial \circ \p, \f \circ \partial =\partial \circ \f,\\
&(\partial a)_{\lambda}v=-\lambda(a_\lambda v),a_\lambda(\partial v)=(\partial+\lambda)a_{\lambda} v,\\
&\f(a_\lambda v)=\a(a)_\lambda(\f(v)),\p(a_\lambda v)=\b(a)_\lambda(\p(v)),\\
&[\b(a)_\lambda b]_{\lambda+\mu}\p(v)=\a\b(a)_\lambda(b_\mu v)-\b(b)_{\mu}(\a(a)_\lambda v). \label{repbihomlie conf}
 \end{align}
\end{df}

\begin{rem}
By  Definition \ref{modulebihomlie conformal},  it is easy to see hat a module over a BiHom-Lie conformal algebra $A$ in
a finite $\mathbb{C}[\pr]$-module $V$ is the same as a homomorphism of BiHom-Lie conformal algebra $\rho: A \to gc(V)$ which is called a representation of $A$.  The identity \eqref{repbihomlie conf} can be written as
\begin{align*}
\rho([\b(a)_\lambda b])_{\lambda+\mu}\circ \p=\rho(\a\b(a))_\l \circ \rho(b)_\mu-\rho(\b(b))_\mu \circ \rho(\a(a))_\l,
\end{align*}
for all $a,b \in A$.  In addition, $\rho$ satisfy $\rho(\pr a)_\l v=-\l \rho(a)_\l v,\ \forall a \in A, v \in V$.\\
The adjoint representation of $A$ is denoted by $ad$, i.e. $ad(a)_\l(b)=[a_\l b]$, where $a,b \in A$.
\end{rem}

\begin{df}
A BiHom-associative conformal superalgebra $\R$ is a  $\mathbb{Z}_2$-graded $\mathbb{C}[\partial]$-module equipped with two even linear endomorphisms
$\a, \b$ and endowed with a $\lambda$-product from $\R\o \R$ to $\mathbb{C}[\partial]\o \R$, for any $a,b,c\in \R$, satisfying the following conditions:
\begin{eqnarray}
&&(\partial a)_\lambda b=-\lambda a_{\lambda}b, a_{\lambda}(\partial b)=(\partial+\lambda)a_{\lambda}b,\\
&& \a\partial=\partial\a,  \b \partial=\partial\b,   \\
&& \a(a_\l b)=\a(a)_\l \a(b),  \b(a_\l b)=\b(a)_\l \b(b),  \\
&& \a(a)_{\lambda}(b_{\mu}c)=(a_\lambda b)_{\lambda+\mu}\a(c).
\end{eqnarray}
\end{df}

\begin{df}
A subset $U$ of a BiHom-conformal (super)-algebra $(A,\c_\l,\a,\b)$ is called a BiHom-conformal subalgebra if $\a(U) \subseteq A,\ \b(U)\subseteq A$ and $a_\l b \in U$, for any $a,b \in U$.

A subset $U$ $A$ is called a a left BiHom-conformal ideal (resp. right BiHom-conformal ideal) if $\a(U) \subseteq A,\ \b(U)\subseteq A$ and $a_\l b \in U$, for any $a \in A, b \in U$.
(resp. for any $a \in U, b \in A$).
\end{df}

%%%%%%%%%%%%%%%%%%%%%%%%%%%%%%%%%%%%%%%
\section{Definitions and Examples}
%%%%%%%%%%%%%%%%%%%%%%%%%%%%%%%%%%%%%%%
In this section, we introduce the notion of BiHom-Lie conformal superalgebra, which is a BiHom-generalization
of (Hom)-Lie conformal superalgebras and also a superanalogue of (Bi)Hom-Lie conformal
algebras.

\begin{df}
A BiHom-Lie conformal superalgebra  $\R=\R_0\oplus \R_1$
 is a $\mathbb{Z}_2$-graded $\mathbb{C}[\partial]$-module equipped with two commuting  linear maps
$\a,\b$ and a $\lambda$-bracket $[\c_{\lambda}\c]$ which defines a $\mathbb{C}$-linear map from $\R \otimes \R$ to  $\R[\lambda]=\mathbb{C}[\lambda]\otimes \R$ such that
 $[\R_{i\l }\R_j]\subseteq \R_{i+j}[\l],\ i,j \in \mathbb{Z}_2$ and
 the following axioms hold for all homogenous elements $a,b,c\in \R$:
\begin{align}
&\a\partial=\partial\a, \b\partial=\partial \b, \\
&\a([a_\lambda b]) = [\a(a)_\lambda \a(b)], \b([a_\lambda b]) = [\b(a)_\lambda \b(b)],\\
&[\partial a_{\lambda}b] =-\lambda[a_\lambda b], [ a_{\lambda}\partial b] =(\partial+\lambda)[a_\lambda b],
\label{conformal sesq}\\
&[\b(a)_{\lambda}\a(b)]=-(-1)^{|a||b|}[\b(b)_{-\lambda-\partial}\a(a)],
\label{bihom skewsymm}\\
&[\a\b(a)_\lambda[b_\mu c]]=[[\b(a)_{\lambda}b]_{\lambda+\mu} \b(c)]+ (-1)^{|a||b|}[\b(b)_{\mu}[\a(a)_{\lambda}c]]. \label{bihom-jaccobi}
\end{align}

\end{df}
A BiHom-Lie conformal superalgebra $(\R,\a,\b)$ is called finite if $\R$ is a finitely generated
$\mathbb{C}[\pr]$-module. The rank of $\R$ is its rank as a $\mathbb{C}[\pr]$-module.
If $\a$ and $\b$ are  algebra isomorphisms, then  $(\R,\a,\b)$ is called regular.

We recover Lie conformal superalgebras when $\a=\b = id$. The BiHom-Lie conformal
algebras are obtained when the odd part is trivial.

A linear map $\rho: \R \to \R'$ is a homomorphism  of BiHom-Lie conformal superalgebras if $\rho$ satisfies $\rho \pr=\pr \rho$, $\rho \a =\a' \rho$,  $\rho \b=\b' \rho$ and
$\rho([a_\l b])=[\rho(a)_\l \rho(b)]$, for all $a,b  \in \R$.

\begin{exa}
Let $g=g_0\oplus g_1$ be a BiHom-Lie superalgebra with Lie bracket $[-,-]$ and with structure maps $\a$ and $\b$. Let $(Curg)_{\theta}:=\mathbb{C}[\partial]\o g_\theta$ be the free $\mathbb{C}[\partial]$-module. Then $Curg=(Curg)_0\oplus (Curg)_1$ is a BiHom-Lie conformal superalgebra with $\lambda$-bracket given by
\begin{eqnarray*}
&& \a(f(\partial)\o a)=f(\partial)\o \a(a),\\
&& \b(f(\partial)\o a)=f(\partial)\o \b(a),\\
&&[(f(\partial)\o a)_{\lambda}(g(\partial)\o b)]=f(-\lambda)g(\partial+\lambda)\o [a,b], \forall a,b\in g.
\end{eqnarray*}
\end{exa}

\begin{exa}
Let $(L, [\c,\c], \a, \b)$ be a regular BiHom-Lie superalgebra.
   Denote by $\hat{L}=L\o \mathbb{C}[t, t^{-1}]$ the affization of $L$ with
  \begin{eqnarray*}
 [u\o t^m, v\o t^n]=[u,v]\o t^{m+n},\ \ |u \otimes t^n|=|u|
  \end{eqnarray*}
  for any $u,v\in L, m,n\in \mathbb{Z}$.
  Extend $\a,\b$ to $\hat{L}$ by $\a(u\o t^{m})=\a(u)\o t^{m}$ and $\b(u\o t^{m})=\b(u)\o t^{m}$.
  Then $(\hat{L}, [\c,\c], \a, \b)$ is a BiHom-Lie superalgebra. By simple verification, we get a BiHom-Lie conformal superalgebra $\R=\mathbb{C}[\partial]L$ with
\begin{eqnarray*}
[u_{\lambda}v]=[u,v], ~~
\a(f(\partial)u)=f(\partial)\a(u), \b(f(\partial)u)=f(\partial)\b(u),
\end{eqnarray*}
 for any $u,v\in L.$
\end{exa}

\begin{exa}
Let $\R=\mathbb{C}[\partial]L\oplus \mathbb{C}[\partial]E$ be a free $\mathbb{Z}_2$-graded $\mathbb{C}[\partial]$-module. Define
\begin{eqnarray*}
&& \a(L)=f(\partial)L, \a(E)=g(\partial)E,\\
&& \b(L)=L, \b(E)=E, \\
&& [L_\lambda L]=(\partial+2\lambda)L, [L_\lambda E]=(\partial+\frac{3}{2}\lambda)E, [E_\lambda L]=(\frac{1}{2}\partial+\frac{3}{2}\lambda)E, [E_\lambda E]=0.
\end{eqnarray*}
Then $(\R,\a,\b)$ is a BiHom-Lie conformal superalgebra, where $R_0=\mathbb{C}[\partial]L$ and $ R_1= \mathbb{C}[\partial]E$.
\end{exa}

\begin{exa}
Let $\R=\mathbb{C}[\partial]e_1\oplus \mathbb{C}[\partial](e_2+e_3)$ be a free $\mathbb{Z}_2$-graded $\mathbb{C}[\partial]$-module and
 \[
 e_1=  \left(
   \begin{array}{ccc}
     0 & 0 & 1 \\
     0 & 0 & 0 \\
     0 & 0 & 0 \\
   \end{array}
 \right),
  e_2= \left (
   \begin{array}{ccc}
     0 & 1 & 0 \\
     0 & 0 & 0 \\
     0 & 0 & 0 \\
   \end{array}
 \right),
 e_3= \left(
   \begin{array}{ccc}
     0 & 0 & 0 \\
     0 & 0 & 1 \\
     0 & 0 & 0 \\
   \end{array}
 \right).
 \]
    Define\begin{eqnarray*}
       && \a(e_1)=e_1, \a(e_2)=e_3, \a(e_3)=e_2,\\
       &&  \b(e_1)=e_1, \b(e_2)=-e_3, \b(e_3)=-e_2,\\
&& [e_{1\lambda} e_1]= [e_{2\lambda} e_2]=[e_{3\lambda} e_3]=0, [e_{1\lambda} e_2]=[e_{1\lambda} e_3]=0, [e_{2\lambda}e_3]= e_1.
          \end{eqnarray*}
One may check directly that $(\R,\a,\b)$ is a BiHom-Lie conformal superalgebra.
\end{exa}

\begin{prop}
Let $\R$ be a Lie conformal superalgebra and $\a,\b:\R \rightarrow \R$ two even commuting linear maps such that
 $$\a\partial=\partial\a, \b\partial=\partial\b, \a([a_\lambda b]) = [\a(a)_\lambda \a(b)], \b([a_\lambda b]) = [\b(a)_\lambda \b(b)].$$
Define   a $\mathbb{C}$-linear map from $\R \otimes  \R$ to  $\R[\lambda]=\mathbb{C}[\lambda]\otimes \R$ by $[a_{\lambda}b]'=[\a(a)_{\lambda}\b(b)]$.
Then $(\R,[\c_\l  \c]',\a,\b)$ is a BiHom-Lie conformal superalgebra.
\end{prop}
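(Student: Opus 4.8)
The plan is to verify directly that the twisted bracket $[a_\l b]'=[\a(a)_\l \b(b)]$ satisfies each defining axiom of a BiHom-Lie conformal superalgebra, exploiting throughout that $\a,\b$ commute with each other and with $\partial$, and that both are endomorphisms for the original $\l$-bracket. The grading constraint $[\R_{i\l}\R_j]'\subseteq\R_{i+j}[\l]$ is immediate, since $\a,\b$ are even and the original bracket respects the grading; and the compatibility $\a\partial=\partial\a$, $\b\partial=\partial\b$ is part of the hypothesis, so it remains only to check multiplicativity, sesquilinearity, skew-symmetry, and the Jacobi identity for the new bracket.

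First I would dispose of the two linear axioms. Multiplicativity of $\a$ follows from
\begin{align*}
\a([a_\l b]')=\a([\a(a)_\l \b(b)])=[\a^2(a)_\l \a\b(b)]=[\a(\a(a))_\l \b(\a(b))]=[\a(a)_\l \a(b)]',
\end{align*}
where the first nontrivial step uses that $\a$ is an endomorphism and the last uses $\a\b=\b\a$; the computation for $\b$ is identical. Conformal sesquilinearity \eqref{conformal sesq} is obtained by commuting $\partial$ past $\a,\b$: for instance $[\partial a_\l b]'=[\a(\partial a)_\l \b(b)]=[\partial\a(a)_\l\b(b)]=-\l[\a(a)_\l\b(b)]=-\l[a_\l b]'$, and likewise $[a_\l\partial b]'=(\partial+\l)[a_\l b]'$.

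Next, for the BiHom skew-symmetry \eqref{bihom skewsymm} I would unwind the twist on both sides and then apply the ordinary skew-symmetry $(\mathrm{C2})_\l$. On the left, $[\b(a)_\l \a(b)]'=[\a\b(a)_\l \b\a(b)]=[\a\b(a)_\l \a\b(b)]$, which the ordinary skew-symmetry turns into $-(-1)^{|a||b|}[\a\b(b)_{-\l-\partial}\a\b(a)]$; on the right, $[\b(b)_{-\l-\partial}\a(a)]'$ expands to the identical expression $[\a\b(b)_{-\l-\partial}\a\b(a)]$, so the two sides coincide. The commutativity $\a\b=\b\a$ is exactly what makes the two twisted arguments collapse to a common pair.

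The main computation, and the only genuine obstacle, is the BiHom-Jacobi identity \eqref{bihom-jaccobi}; here I would rewrite every term purely in terms of the original bracket and invoke the ordinary Jacobi identity $(\mathrm{C3})_\l$ exactly once. Setting $A=\a^2\b(a)$, $B=\a\b(b)$, $C=\b^2(c)$, a careful unwinding — using repeatedly that $\a,\b$ are endomorphisms and commute with each other and with $\partial$ — gives
\begin{align*}
[\a\b(a)_\l[b_\mu c]']'&=[A_\l[B_\mu C]],\\
[[\b(a)_\l b]'_{\l+\mu}\b(c)]'&=[[A_\l B]_{\l+\mu}C],\\
[\b(b)_\mu[\a(a)_\l c]']'&=[B_\mu[A_\l C]].
\end{align*}
Since $\a,\b$ are even we have $|A|=|a|$ and $|B|=|b|$, so the ordinary Jacobi identity $[A_\l[B_\mu C]]=[[A_\l B]_{\l+\mu}C]+(-1)^{|A||B|}[B_\mu[A_\l C]]$ reassembles precisely into \eqref{bihom-jaccobi}. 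The delicate point is the bookkeeping in these three reductions: one must track which composite of $\a,\b$ lands in each slot — note that the first argument acquires $\a^2\b$ because the outer $\a$ coming from the twist composes with the inner $\a\b$ — and it is again $\a\b=\b\a$ that lets the mixed composites in each term be identified with the common symbols $A,B,C$. Once these three equalities are established the proposition follows at once.
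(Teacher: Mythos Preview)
Your proof is correct and follows essentially the same approach as the paper. The paper's proof restricts attention to the skew-symmetry and Jacobi axioms, whereas you also write out multiplicativity and sesquilinearity explicitly; for the Jacobi identity your introduction of the shorthand $A=\a^2\b(a)$, $B=\a\b(b)$, $C=\b^2(c)$ is exactly the reduction the paper performs inline, and both arguments invoke the ordinary Jacobi identity once after unwinding the twist.
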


\begin{proof}
We will just verify axioms \eqref{bihom skewsymm} and \eqref{bihom-jaccobi}. Let $a,b,c \in \R$. Then
\begin{align*}
 [\b(a)_\l \a(b)]'=[\a\b(a)_\l \a\b(b)]
 = -[\a\b(b)_{-\l-\pr}\a\b(a)]
 = -[\b(b)_{-\l-\pr} \a(a)]'.
\end{align*}
On the other hand, we have
\begin{align*}
&[\a\b(a)_\l[b_\mu c]']'= [\a^2\b(a)_\l [\a\b(b)_\mu \b^2(c)]]  \\
=&[ [\a^2\b(a)_\l \a\b(b)]_{\l+\mu} \b^2(c)]+(-1)^{|a||b|}[\a\b(b)_\mu[\a^2\b(a)_\l \b^2(c)]] \\
=&[[\b(a)_{\lambda}b]'_{\lambda+\mu} \b(c)]'+ (-1)^{|a||b|}[\b(b)_{\mu}[\a(a)_{\lambda}c]']'.
\end{align*}
\end{proof}

The following result can be proved similarly to the previous one.
\begin{prop}
Let $(\R,[\c_\l \c],\a,\b)$  be a BiHom-Lie conformal superalgebra.   Let $\a',\b':  \R \to \R$ be two even homomorphisms of conformal algebras such that any two of the maps $\a,\b,\a',\b'$ commute.  Then $(\R,[\c_\l\c]'=[\c_\l \c]\circ (\a' \otimes \b'), \a \circ \a',\b \circ \b')$ is a BiHom-Lie conformal superalgebra.
\end{prop}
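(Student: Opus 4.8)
The plan is to verify directly the five families of axioms defining a BiHom-Lie conformal superalgebra for the twisted datum $(\R, [\c_\l\c]', \a\a', \b\b')$, reducing each one to the corresponding axiom for $(\R, [\c_\l\c], \a, \b)$ by pushing the extra maps $\a', \b'$ through the brackets. The three tools I will use repeatedly are: (i) the pairwise commutativity of $\a, \b, \a', \b'$; (ii) the hypothesis that $\a', \b'$ are even conformal homomorphisms, so that they commute with $\pr$ and satisfy $\a'([a_\l b]) = [\a'(a)_\l \a'(b)]$ and likewise for $\b'$ (in particular they preserve parity); and (iii) the multiplicativity of $\a, \b$ with respect to the untwisted bracket. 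I would first dispose of the bookkeeping axioms. Commutation of $\a\a'$ and $\b\b'$ with $\pr$ follows by swapping $\pr$ past each factor in turn. Multiplicativity of $\a\a'$ (resp. $\b\b'$) for $[\c_\l\c]'$ follows by writing $\a\a'([a_\l b]') = \a\a'[\a'(a)_\l \b'(b)]$, moving $\a'$ inside by (ii) and then $\a$ inside by (iii) to reach $[\a(\a')^2(a)_\l\a\a'\b'(b)]$, and comparing with $[(\a\a')(a)_\l (\a\a')(b)]' = [\a'\a\a'(a)_\l\b'\a\a'(b)]$ after regrouping by (i). Finally, conformal sesquilinearity \eqref{conformal sesq} is immediate because $\a'$ commutes with $\pr$.

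For skew-symmetry \eqref{bihom skewsymm}, I would start from $[(\b\b')(a)_\l (\a\a')(b)]' = [\a'\b\b'(a)_\l \b'\a\a'(b)]$ and use (i) to rewrite the two arguments as $\b(\a'\b'(a))$ and $\a(\a'\b'(b))$. Applying the untwisted skew-symmetry \eqref{bihom skewsymm} with $x = \a'\b'(a)$ and $y = \a'\b'(b)$, whose parities equal those of $a$ and $b$ by (ii), produces $-(-1)^{|a||b|}[\b(\a'\b'(b))_{-\l-\pr}\a(\a'\b'(a))]$; undoing the same rewriting on the right-hand side (again using that $\a',\b'$ commute with $\pr$) identifies this with $-(-1)^{|a||b|}[(\b\b')(b)_{-\l-\pr}(\a\a')(a)]'$, as required.

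The main obstacle is the BiHom-Jacobi identity \eqref{bihom-jaccobi}, which demands the most careful bookkeeping. I plan to expand the twisted left-hand side $[(\a\a')(\b\b')(a)_\l [b_\mu c]']'$ by first applying $\b'$ to the inner bracket $[b_\mu c]' = [\a'(b)_\mu \b'(c)]$ via (ii), then collecting the composite maps with (i), to arrive at $[\a\b(\a')^2\b'(a)_\l [\a'\b'(b)_\mu (\b')^2(c)]]$. The key observation is that this is precisely an instance of the untwisted identity \eqref{bihom-jaccobi} under the substitution $x = (\a')^2\b'(a)$, $y = \a'\b'(b)$, $z = (\b')^2(c)$. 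I would then expand the two twisted terms on the right-hand side the same way, pushing $\a'$ and $\b'$ through the nested brackets by (ii) and regrouping by (i), and check that they match termwise the two terms produced by the untwisted Jacobi identity; the sign works out because $(-1)^{|x||y|} = (-1)^{|a||b|}$, $\a'$ and $\b'$ being even. Once the substitution is recognized, no genuinely new difficulty remains: the whole verification reduces to tracking composites of $\a, \b, \a', \b'$ and invoking (i)--(iii) together with the original axioms, exactly as in the preceding proposition.
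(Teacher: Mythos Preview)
Your proposal is correct and follows essentially the same approach as the paper, which simply states that the result ``can be proved similarly to the previous one'' and gives no further details; your write-up is in fact a more careful version of exactly that verification, with the key substitution $x=(\a')^2\b'(a)$, $y=\a'\b'(b)$, $z=(\b')^2(c)$ reducing the twisted BiHom-Jacobi identity to the original one.
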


\begin{cor}
Let $(\R,[\c_\l \c],\a,\b)$  be a BiHom-Lie conformal superalgebra. Then $(\R,[\c_\l\c]'=[\c_\l \c]\circ (\a^k \otimes \b^k), \a^{k+1},\b^{k+1})$ is a BiHom-Lie conformal superalgebra.
\end{cor}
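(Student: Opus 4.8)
The plan is to derive this as a direct instance of the preceding Proposition, taking $\a' = \a^k$ and $\b' = \b^k$. The only work is to check that this choice satisfies the two hypotheses of that Proposition, after which the conclusion is immediate.

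First I would verify that $\a^k$ and $\b^k$ are even homomorphisms of conformal superalgebras. This follows by a routine induction on $k$: by the defining axioms of a BiHom-Lie conformal superalgebra, $\a$ is even, commutes with $\pr$, and satisfies $\a([a_\l b]) = [\a(a)_\l \a(b)]$; composing these properties shows that each power $\a^k$ inherits them, and the same argument applies to $\b^k$. Next I would check the commutativity requirement, namely that any two of $\a, \b, \a^k, \b^k$ commute. Since $\a\b = \b\a$ is built into the definition, powers of $\a$ commute with powers of $\b$; a map always commutes with its own powers; and $\a$ (resp. $\b$) commutes with $\b^k$ (resp. $\a^k$) because $\a$ and $\b$ commute. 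Hence all four maps pairwise commute.

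With both hypotheses confirmed, the Proposition applies verbatim and yields that
$$\left(\R,\ [\c_\l \c] \circ (\a^k \o \b^k),\ \a \circ \a^k,\ \b \circ \b^k\right)$$
is a BiHom-Lie conformal superalgebra. Finally I would observe that $\a \circ \a^k = \a^{k+1}$ and $\b \circ \b^k = \b^{k+1}$, so this is precisely the structure $(\R, [\c_\l\c]', \a^{k+1}, \b^{k+1})$ claimed in the statement.

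There is essentially no obstacle here: the entire content is the bookkeeping that $\a^k, \b^k$ are well-behaved homomorphisms satisfying the commutativity hypothesis, all of which reduces to the single relation $\a\b = \b\a$. The substantive axiom-chasing has already been carried out once and for all in the proof of the Proposition, so no direct verification of \eqref{bihom skewsymm} or \eqref{bihom-jaccobi} is needed for the corollary.
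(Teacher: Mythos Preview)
Your proposal is correct and matches the paper's approach: the paper states this as an immediate corollary of the preceding proposition with no proof given, and your derivation via the specialization $\a'=\a^k$, $\b'=\b^k$ is exactly the intended reasoning. If anything, you have spelled out more detail than the paper does.
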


\begin{prop}
Let $(\R,\a,\b)$ and  $(\R',\a',\b')$ be two BiHom-Lie conformal superalgebras,   Then $(\R\oplus \R',\a+\a',\b+\b')$ is a BiHom-Lie conformal superalgebra with
\begin{eqnarray*}
&&[(u_1+v_1)_\lambda (u_2+v_2)]=[u_{1 \lambda}v_1]+[u_{2 \lambda}v_2],\forall u_1,u_2\in \R, \forall v_1,v_2\in \R',\\
&&(\a+\a')(u+v)=\a(u)+\a'(v),    (\b+\b')(u+v)=\b(u)+\b'(v), \forall u\in \R, \forall v\in \R'.
\end{eqnarray*}
\end{prop}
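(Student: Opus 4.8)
The plan is to equip the $\mathbb{C}[\pr]$-module $\R\oplus\R'$ with the direct-sum $\mathbb{Z}_2$-grading $(\R\oplus\R')_i=\R_i\oplus\R'_i$ and with the diagonal $\mathbb{C}[\pr]$-action, and then to check the five defining axioms of a BiHom-Lie conformal superalgebra one by one. The essential feature I would exploit is that the bracket carries no cross terms: writing a general homogeneous element as $u+v$ with $u\in\R$ and $v\in\R'$ of the same parity, one has $[(u_1+v_1)_\l(u_2+v_2)]=[u_{1\l}u_2]+[v_{1\l}v_2]$, so that $[\R_\l\R']=[\R'_\l\R]=0$. Likewise $\a+\a'$, $\b+\b'$ and $\pr$ all act block-diagonally with respect to this decomposition.

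Because every operation in sight respects the splitting $\R\oplus\R'$, the whole argument reduces to a projection principle: substituting sums into any defining identity yields an expression whose $\R$-component is exactly that identity for $\R$ and whose $\R'$-component is that identity for $\R'$, both of which hold by hypothesis. I would make this explicit axiom by axiom. Commutation with $\pr$ and the two multiplicativity relations follow at once from $(\a+\a')\big([u_{1\l}u_2]+[v_{1\l}v_2]\big)=\a([u_{1\l}u_2])+\a'([v_{1\l}v_2])$ together with the multiplicativity of $\a,\a'$ (and symmetrically for $\b+\b'$). Conformal sesquilinearity \eqref{conformal sesq} is immediate, since $\pr$ acts diagonally and the scalar factors $-\l$ and $\pr+\l$ pull uniformly through both summands.

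For BiHom skew-symmetry \eqref{bihom skewsymm} and the BiHom-Jacobi identity \eqref{bihom-jaccobi}, I would first record the parity bookkeeping, which is the only point needing care. A homogeneous element of $\R\oplus\R'$ of parity $i$ lies in $\R_i\oplus\R'_i$, so both of its components have parity $i$; hence the sign factor $(-1)^{|a||b|}$ attached to a pair of homogeneous elements is the same whether computed in $\R\oplus\R'$ or in either summand. With this in hand, expanding $[\b(a)_{\l}\a(b)]$, or the triple brackets in the Jacobi identity, splits each side into an $\R$-part and an $\R'$-part carrying identical signs, and the two corresponding axioms for $\R$ and $\R'$ finish the verification.

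The computation is entirely routine; the only mild obstacle will be the sign bookkeeping just described, namely confirming that the direct-sum grading is compatible so that the parity factors in the skew-symmetry and Jacobi axioms agree across both summands. Once that compatibility is noted, each axiom decomposes block-diagonally and follows from the corresponding axiom in $\R$ and $\R'$, which completes the proof.
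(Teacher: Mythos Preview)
Your proposal is correct and follows the same approach as the paper, which simply records the proof as ``Straightforward''; your block-diagonal projection argument is exactly the routine verification the paper omits. The only thing to note is that you have written the proof in more detail than the paper itself.
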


\begin{proof}
Straightforward.
\end{proof}

\begin{df}
Let $(M,\a, \b)$ and $(N, \f,\p)$ be $\mathbb{Z}_2$-graded $\mathbb{C}[\partial]$-modules. A BiHom-conformal linear map of degree $\theta$ from $M$ to $N$ is a sequence $f={f_{(n)}}_{n\in \mathbb{Z}_{+}}$ of $f_{(n)}\in Hom_{\mathbb{C}}(M, N)$ satisfying that
\begin{eqnarray*}
&&\partial_N f_{(n)}-f_{(n)}\partial_M=-n f_{(n-1)}, f_{\lambda}(M_\mu)\subseteq N_{\mu+\theta}, n\in \mathbb{Z}_{+}, \mu, \theta\in \mathbb{Z}_2,\\
&& \partial_M\a=\a \partial_M,    \partial_M\b=\b \partial_M,
\partial_N \f=\f \partial_N,  \partial_N \p=\p \partial_N,  \\
&& f_{(n)}\a=\f f_{(n)}, f_{(n)}\b=\p f_{(n)}.
\end{eqnarray*}
Set $f_\lambda=\sum_{n=0}^{\infty}\frac{\lambda^{n}}{n!}f_{(n)}$. Then $f$ is a BiHom-conformal linear map of degree $\theta$ if and only if
\begin{eqnarray*}
&&f_{\lambda} \partial_M=(\partial_N+\lambda)f_\lambda, f_{\lambda}(M_\mu)\subseteq N_{\mu+\theta}[\lambda],\\
&& \partial_M\a=\a \partial_M,    \partial_M\b=\b \partial_M,
\partial_N \f=\f \partial_N,  \partial_N \p=\p \partial_N,  \\
&&f_{\lambda}\a=\f f_{\lambda},f_{\lambda}\b=\p f_{\lambda}.
\end{eqnarray*}
\end{df}

Let $Chom(M,N)_{\theta}$ denote the set of BiHom-conformal linear maps of degree $\theta$ from $M$ to $N$. Then $Chom(M,N)=Chom(M,N)_0 \oplus Chom(M,N)_1$ is a $\mathbb{Z}_{2}$-graded $\mathbb{C}[\partial]$-module via:
\begin{eqnarray*}
\partial f_{(n)}=-n f_{(n-1)},\ {\rm equivalently},\ \partial f_{\lambda}=-\lambda f_{\lambda}.
\end{eqnarray*}
The composition $f_{\lambda}g: M \rightarrow N\otimes\mathbb{C}[\lambda]$ of BiHom-conformal linear maps $f:M\rightarrow N$ and $g:L\rightarrow M$ is given by
 \begin{eqnarray*}
(f_{\lambda}g)_{\lambda+\mu}=f_{\lambda}g_{\mu},\ \ \ \forall  f,g\in Chom(M,N).
 \end{eqnarray*}

If $M$ is a finitely generated $\mathbb{Z}_{2}$-graded $\mathbb{C}[\partial]$-module, then $Cend(M):=Chom(M,M)$ is an associative conformal superalgebra with respect to the above composition. Thus, $Cend(M)$ becomes a Lie conformal superalgebra, denoted as $gc(M)$, with respect to the following $\lambda$-bracket
\begin{eqnarray}\label{crochet de cend}
[f_{\lambda}g]_{\mu}=f_{\lambda}g_{\mu-\lambda}-(-1)^{|f||g|}g_{\mu-\lambda}f_{\lambda}.
\end{eqnarray}
Hereafter all $\mathbb{Z}_{2}$-graded $\mathbb{C}[\partial]$-modules are supposed to be  finitely generated.

\begin{prop}
Let $(\R,\a,\b)$ be a regular BiHom-associative conformal superalgebra with two even morphisms $\a$ and $\b$. Then the $\l$-bracket
 \begin{eqnarray*}
[a_{\lambda}b]=a_{\lambda} b-(-1)^{|a||b|}\a^{-1}\b(b)_{-\lambda-\partial} \a\b^{-1}(a), \forall a,b\in \R.
  \end{eqnarray*}
defines a  BiHom-Lie conformal  superalgebra structure on $\R$.
\end{prop}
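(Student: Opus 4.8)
The plan is to check, for the bracket $[a_\l b]=a_\l b-(-1)^{|a||b|}\a^{-1}\b(b)_{-\l-\pr}\a\b^{-1}(a)$, each of the defining axioms of a BiHom-Lie conformal superalgebra in turn. The commutation $\a\pr=\pr\a$, $\b\pr=\pr\b$ is inherited from $\R$, and the grading condition $[\R_{i\l}\R_j]\subseteq\R_{i+j}[\l]$ holds because the associative $\l$-product respects the grading and the even maps $\a,\b$ preserve parity, so that $\a^{-1}\b(b)$ and $\a\b^{-1}(a)$ have parities $|b|$ and $|a|$. It then remains to verify multiplicativity of $\a,\b$, conformal sesquilinearity \eqref{conformal sesq}, skew-symmetry \eqref{bihom skewsymm}, and the BiHom-Jacobi identity \eqref{bihom-jaccobi}; the first three are formal and the last is the real work.

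For multiplicativity I would apply $\a$ to the defining formula and use that $\a$ is a conformal homomorphism, hence $\a^{-1}(x_\l y)=\a^{-1}(x)_\l\a^{-1}(y)$, on each of the two summands; since $\a,\b$ commute and are invertible (so that $\a^{-1}\b\a=\b$), this turns $\a([a_\l b])$ into exactly $[\a(a)_\l\a(b)]$, and likewise for $\b$. Conformal sesquilinearity \eqref{conformal sesq} follows from the sesquilinearity of the associative product together with $\a\b^{-1}\pr=\pr\a\b^{-1}$: the first summand transforms by hypothesis, and in the reflected summand the substitution $\mu\mapsto-\l-\pr$ converts the left sesquilinearity of the associative product into the required relations for $[\pr a_\l b]$ and $[a_\l\pr b]$.

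Skew-symmetry \eqref{bihom skewsymm} is built into the shape of the bracket, just as $ab-(-1)^{|a||b|}ba$ is manifestly graded-skew in the classical case. Expanding $[\b(a)_\l\a(b)]$ and using the evenness of $\a,\b$ with $\a^{-1}\b\a=\b$ and $\a\b^{-1}\b=\a$ collapses it to $\b(a)_\l\a(b)-(-1)^{|a||b|}\b(b)_{-\l-\pr}\a(a)$; expanding the right-hand side $-(-1)^{|a||b|}[\b(b)_{-\l-\pr}\a(a)]$ and performing the $-\l-\pr$ substitution (which interchanges the two reflected products) yields the same two terms. The only delicate point here is tracking the nested $-\l-\pr$ substitutions, which is governed by the sesquilinearity already established.

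The main obstacle is the BiHom-Jacobi identity \eqref{bihom-jaccobi}. I would substitute the definition of the bracket everywhere: the left-hand side $[\a\b(a)_\l[b_\mu c]]$ unfolds into four associative monomials (two coming from $[b_\mu c]$, each doubled by the outer bracket), while the two summands on the right-hand side unfold into eight. I would normalize every monomial to the form $\a(x)_\nu(y_\rho z)$ or a once-reflected analogue, and then apply the BiHom-associativity of $\R$, in the form $\a(x)_\l(y_\mu z)=(x_\l y)_{\l+\mu}\b(z)$, to regroup each triple product; the invertibility and commutativity of $\a,\b$ serve to push the twists $\a^{-1}\b$ and $\a\b^{-1}$ through the products. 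After regrouping I expect the four left-hand monomials to coincide with four of the right-hand monomials (with matching spectral parameters $\l,\mu,\l+\mu$ and matching parity signs), while the remaining four right-hand monomials cancel pairwise. The entire difficulty is the bookkeeping of these parity signs and of the shifts $-\l-\pr$, $-\mu-\pr$, $-(\l+\mu)-\pr$; once it is organized, the identity reduces termwise to BiHom-associativity, exactly as the classical Jacobi identity for a commutator reduces to associativity.
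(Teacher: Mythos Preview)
Your proposal is correct and follows essentially the same approach as the paper: the paper, like you, verifies skew-symmetry by expanding $[\b(a)_\l\a(b)]$ into the two associative summands $\b(a)_\l\a(b)-(-1)^{|a||b|}\b(b)_{-\l-\pr}\a(a)$ and reading off the identity, and then handles the BiHom-Jacobi identity by expanding each of the three bracket terms into its four associative monomials and invoking BiHom-associativity (the paper omits the detailed term matching you describe, simply stating ``Using the associativity, it is not hard to check''). One small caution: in this paper the BiHom-associativity axiom is recorded as $\a(a)_\l(b_\mu c)=(a_\l b)_{\l+\mu}\a(c)$ (with $\a$, not $\b$, on the right), so when you carry out the bookkeeping be sure to use the form actually assumed.
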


\begin{proof}
For any $a,b \in \R$, we have
\begin{align*}
    &[\b(a)_\l \a(b)]=\b(a)_\l \a(b)-(-1)^{|a||b|}\b(b)_{-\lambda-\partial}\a(a) \\
    =&-(-1)^{|a||b|}\big(\b(b)_{-\lambda-\partial}\a(a)-(-1)^{|a||b|}\b(a)_\l \a(b) \big) \\
   =&  -(-1)^{|a||b|} [\b(b)_\l \a(a)].
\end{align*}
On the other hand, given $a,b,c \in \R$. Then
\begin{eqnarray*}
&&[\a\b(a)_{\lambda}[b_{\mu}c]]
=[\a\b(a)_{\lambda}(b_{\mu}c-(-1)^{|b||c|}\a^{-1}\b(c)_{-\mu-\partial} \a\b^{-1}(b))]\\
&=&\a\b(a)_{\lambda}(b_{\mu}c)-(-1)^{|a|(|b|+|c|)}a^{-1}\b(b_{\mu}c)_{-\lambda-\partial}\a^2(a)
-(-1)^{|b||c|}\a\b(a)_{\lambda}(\a^{-1}\b(c)_{-\mu-\partial} \a\b^{-1}(b)) \\
&+&(-1)^{|a||c| +|a||b|+|b||c|}\a^{-1}\b(\a^{-2}\b^2(c)_{-\mu-\partial} b)_{-\lambda-\partial}\a^2(a).
  \end{eqnarray*}
Similarly, we have
\begin{eqnarray*}
&&[[\b(a)_\lambda b]_{\lambda+\mu}\b(c)]\\
&=& [(\b(a)_{\lambda} b-(-1)^{|a||b|}\a^{-1}\b(b)_{-\lambda-\partial} \a(a))_{_{\lambda+\mu}}\b(c)]\\
&=& (\b(a)_{\lambda}b)_{\lambda+\mu}\b(c)
-(-1)^{|c|(|a|+|b|)}\a^{-1}\b^2(c)_{-\lambda-\mu-\partial}(\a(a)_{\lambda} \a\b^{-1}(b))\\
&&-(-1)^{|a||b|}(\a^{-1}\b(b)_{-\lambda-\partial} \a(a))_{_{\lambda+\mu}}\b(c)+(-1)^{|a||c|+
|a||b|+|b||c|}\a^{-1}\b^2(c)_{-\lambda-\mu-\partial}(b_{-\lambda-\partial} \a^2\b^{-1}(a))
\end{eqnarray*}
and
\begin{eqnarray*}
&&(-1)^{|a||b|}[\b(b)_{\mu},[\a(a)_{\lambda}c]]\\
&=&(-1)^{|a||b|}\b(b)_{\mu}(\a(a)_{\lambda}c)
-(-1)^{|a|(|c|+|b|)}\b(b)_{\mu}(\a^{-1}\b(c)_{-\lambda-\partial} \a^2\b^{-1}(a))\\
&&-(-1)^{|a||c|}(\b(a)_{\lambda}\a^{-1}\b(c))_{-\mu-\partial}\a(b)
+(-1)^{|a||b|}(\a^{-2}\b^2(c)_{-\lambda-\partial} \a(a))_{-\mu-\partial}\a(b).
  \end{eqnarray*}
Using the associativity, it is not hard to check that
\begin{eqnarray*}
[\a\b(a)_\lambda[b_\mu c]]=[[\b(a)_{\lambda}b]_{\lambda+\mu} \b(c)]+ (-1)^{|a||b|}[\b(b)_{\mu}[\a(a)_{\lambda}c]]
\end{eqnarray*}
as desired. This finishes the proof.
\end{proof}

%%%%%%%%%%%%%%%%%%%%%%%%%%%%%%%%%%%%%%%%%%%%
\section{Cohomology of BiHom-Lie conformal superalgebra}
%%%%%%%%%%%%%%%%%%%%%%%%%%%%%%%%%%%%%%%%%%%%
In this section, we develop representation and cohomology theory of BiHom-Lie conformal superalgebras. 
\begin{df}
A $\mathbb{C}[\pr]$-module $(M,\f,\p)$ is a BiHom-conformal module of
a BiHom-Lie conformal superalgebra $\R$ if there is a $\mathbb{C}$-linear map $\rho: \R \to Cend(M)$ satisfying
the following conditions (for all $a,b \in \R$)
\begin{eqnarray}
&&\rho(\partial(a))_{\lambda}=-\lambda\rho(a)_{\lambda},
\rho(a)_\l \pr=(\l+\pr)\rho(a)_\l \\
&&   \f \p=\p \f,  \
\p  \partial =\partial  \p,\  \f  \partial =\partial  \f,\\
&&\f \rho(a)_\lambda =\rho(\a(a))_\lambda \f ,\p \rho(a)_\lambda =\rho(\b(a))_\lambda \p,\\
&&     \rho([\b(a)_\lambda b])_{\lambda+\mu} \p=\rho(\a\b(a))_\l  \rho(b)_\mu-(-1)^{|a||b|} \rho(\b(b))_\mu  \rho(\a(a))_\l.
\end{eqnarray}
The map $\rho$ is called the corresponding representation.
\end{df}

\begin{exa}
Let  $(\R,\a,\b)$  be a BiHom-Lie conformal superalgebra. Then $(\R,\a,\b)$ is an $\R$-module under the
adjoint diagonal action, $\rho(a)_\l b=[a_\l b],\ \forall a,b \in \R$.
\end{exa}

\begin{exa}
Let $(g,[\c,\c],\a,\b)$ be a finite BiHom-Lie superalgebra, $(M,\f,\p)$ be a BiHom-conformal module of $g$ with corresponding representation map  $\varrho: g \to gl(M)$.
Then the $\mathbb{Z}_2$-graded $\mathbb{C}[\pr]$-module $(\mathbb{C}[\pr]\otimes M,\widetilde{\f},\widetilde{\p})$ is a BiHom-conformal module of $Cur\ g$
where
\begin{align*}
&\widetilde{\f}(f(\pr)\otimes m)=f(\pr)\otimes \f(m),\  \widetilde{\p}(f(\pr)\otimes m)=f(\pr)\otimes \p(m),
\end{align*}
 with a a module structure $\rho: Cur \ g \to Cend(\mathbb{C}[\pr]\otimes M)$ given by
\begin{align*}
& \rho(f(\pr)\otimes a)_\l (g(\pr)\otimes m)=f(-\pr)g(\pr+\l)\otimes \varrho(a)(m),\
\forall f,g \in \mathbb{C}[\pr], a \in g, m \in M,
\end{align*}
\end{exa}

\begin{prop}
Let $(\R,\a,\b)$ be a regular BiHom-Lie conformal superalgebra, $(M,\f,\p)$ a BiHom-conformal module of $\R$ and $\rho:\R\rightarrow\mathbb{C}[\lambda]\otimes Cend(M): a \mapsto\rho(a)_\lambda$ the corresponding representation. Define a $\lambda$-bracket $[-_\lambda -]_M$ on $\R \oplus M=(\R \oplus M)_0\oplus(\R \oplus M)_1$ by
\begin{eqnarray}\label{lemm2-14}
[(r+m)_\lambda(r'+m')]_M=[r_\lambda r']+\rho(r)_\lambda m'-(-1)^{|r'||m|}\rho(\a^{-1}\b(r'))_{-\partial-\lambda}\f\p^{-1}(m),
\end{eqnarray}
$\forall \, r+m,r'+m'\in\R\oplus M$, where $(\R \oplus M)_{\theta}=\R_\theta \oplus M_\theta, \theta\in\mathbb{Z}_{2}.$
Then $(\R\oplus M, \a+\f,\b+\p)$ is a BiHom-Lie conformal superalgebra, called the semidirect product of $\R$ and $M$, and denote by $\R\ltimes_{\rho} M$.
\end{prop}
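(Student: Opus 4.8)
The plan is to verify, one axiom at a time, that the datum $(\R\sd M,[\c_\l\c]_M,\a+\f,\b+\p)$ satisfies the definition of a BiHom-Lie conformal superalgebra, reducing every axiom to properties already available for $(\R,\a,\b)$ and for the module $(M,\f,\p,\rho)$. Throughout I test identities on homogeneous elements lying entirely in $\R$ or entirely in $M$, which suffices by $\mathbb{C}$-bilinearity.

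First I would dispose of the structural compatibilities. That $\a+\f$ and $\b+\p$ commute with $\partial$ and with each other is immediate from $\a\partial=\partial\a$, $\f\partial=\partial\f$ (and the $\b,\p$ analogues) together with $\f\p=\p\f$. Multiplicativity $(\a+\f)([x_\l y]_M)=[(\a+\f)(x)_\l(\a+\f)(y)]_M$ splits into the $\R$-part, handled by $\a([r_\l r'])=[\a(r)_\l\a(r')]$, and the $M$-part, handled by $\f\rho(a)_\l=\rho(\a(a))_\l\f$; the only point to watch is that $\a^{-1}\b$ commutes with $\a$, so that the twisted factor $\rho(\a^{-1}\b(r'))$ in \eqref{lemm2-14} transforms correctly. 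Conformal sesquilinearity likewise decomposes: on the $\R$-component it is \eqref{conformal sesq}, and on the $M$-component it follows from $\rho(\partial a)_\l=-\l\rho(a)_\l$ and $\rho(a)_\l\partial=(\partial+\l)\rho(a)_\l$, together with $\f\p^{-1}$ commuting with $\partial$.

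The skew-symmetry \eqref{bihom skewsymm} is where the precise shape of \eqref{lemm2-14} pays off. Expanding $[(\b+\p)(r+m)_\l(\a+\f)(r'+m')]_M$ produces three summands: the bracket $[\b(r)_\l\a(r')]$, which is skew by \eqref{bihom skewsymm} on $\R$; and two representation terms $\rho(\b(r))_\l\f(m')$ and $-(-1)^{|r'||m|}\rho(\b(r'))_{-\partial-\l}\f(m)$, after using $\a^{-1}\b\a=\b$ and $\f\p^{-1}\p=\f$. Comparing with $-(-1)^{|r+m||r'+m'|}[(\b+\p)(r'+m')_{-\l-\partial}(\a+\f)(r+m)]_M$ and tracking the Koszul signs, the substitution $\l\mapsto-\l-\partial$ interchanges the two representation terms of one side with those of the other, so the identity holds; the $\R$-bracket terms match by \eqref{bihom skewsymm}.

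The Jacobi identity \eqref{bihom-jaccobi} is the main obstacle, handled by a case analysis on the membership of the three arguments, giving eight cases. When all three lie in $\R$ it is exactly \eqref{bihom-jaccobi} on $\R$. Since $[m_\l m']_M=0$ for $m,m'\in M$, every case with at least two module arguments collapses: each outer bracket contains a bracket of two $M$-elements and both sides vanish. The three genuinely mixed cases, with exactly one argument in $M$, are the heart of the computation. For the case $(\R,\R,M)$ one finds directly that the identity becomes $\rho(\a\b(a))_\l\rho(b)_\mu m=\rho([\b(a)_\l b])_{\l+\mu}\p(m)+(-1)^{|a||b|}\rho(\b(b))_\mu\rho(\a(a))_\l m$, which is precisely the representation compatibility axiom; the cases $(M,\R,\R)$ and $(\R,M,\R)$ reduce to the same axiom after moving the twists $\a,\b,\a^{-1}\b$ through $\rho$ via $\f\rho(a)_\l=\rho(\a(a))_\l\f$ and its $\b$-analogue. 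The genuine difficulty here is bookkeeping: correctly propagating the spectral shifts $\l,\mu,-\partial-\l$ and the Koszul signs produced by permuting the odd module element past the $\R$-elements. Once these three identities are matched term by term, all axioms are established and $(\R\sd M,\a+\f,\b+\p)$ is a BiHom-Lie conformal superalgebra.
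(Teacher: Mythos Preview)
Your proposal is correct and follows essentially the same route as the paper: verify the axioms of a BiHom-Lie conformal superalgebra one by one, reducing conformal sesquilinearity and BiHom-skew-symmetry to the module axioms and the structure on $\R$, and then handling the BiHom-Jacobi identity via the representation compatibility $\rho([\b(a)_\l b])_{\l+\mu}\p=\rho(\a\b(a))_\l\rho(b)_\mu-(-1)^{|a||b|}\rho(\b(b))_\mu\rho(\a(a))_\l$. The only organisational difference is that the paper expands each of the three Jacobi terms on general elements $r+m,\ r'+m',\ r''+m''$ at once (collecting the $\R$-piece and the three $M$-pieces of each term) and then appeals to the module relations, whereas you arrange the same computation as an eight-case analysis according to which arguments lie in $M$; the content is identical.
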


%-----------------------
\begin{proof}
$\forall\, r+m,r'+m',r''+m''\in \R\oplus M$, note that $\R \oplus M$ is equipped with a $\mathbb{C}[\partial]$-module structure via
\begin{eqnarray*}
\partial(r+m)=\partial(r)+\partial(m).
\end{eqnarray*}
In addition, it is immediate that $\pr(\a+\f)=(\a+\f)\pr$ and  $\pr(\b+\p)=(\b+\p)\pr$.
A direct computation gives
\begin{eqnarray*}
&&[\partial(r+m)_\lambda(r'+m')]_M
=[(\partial r+\partial m)_\lambda(r'+m')]_M\\
&=&[(\partial r)_\lambda r']+\rho(\partial r)_\lambda(m')-(-1)^{|r'||m|}\rho(\a^{-1}\b(r'))_{-\partial-\lambda}(\partial \f\p^{-1}(m))\\
&=&-\lambda[r_\lambda r']-\lambda \rho(r)_\lambda(m')-
(-1)^{|r'||m|}(\partial-\lambda-\partial)\rho(\a^{-1}\b(r'))_{-\partial-\lambda}(\f\p^{-1}(m))\\
&=&-\lambda([r_\lambda r']+\rho(r)_\lambda(m')-(-1)^{|r'||m|}\rho(\a^{-1}\b(r'))_{-\partial-\lambda}(\f\p^{-1}(m)))\\
&=&-\lambda[(r+m)_\lambda(r'+m')]_M
\end{eqnarray*}
and
\begin{align*}
&[(r+m)_\l \pr(r'+m')]_M=[(r+m)_\l(\pr r'+\pr m')]_M \\
=&[r_\l \pr r']+\rho(r)_\l (\pr m')-(-1)^{|r'||m|}\rho (\pr \a^{-1}\b(r'))_{-\l-\pr} \f\p^{-1}(m) \\
=&(\l+\pr)([r_\l r'] +  \rho(r)_\l (m')-(-1)^{|r'||m|}\rho (\a^{-1}\b(r'))_{-\l-\pr} \f\p^{-1}(m) )\\
=& (\l+\pr) [(r+m)_\l (r'+m')]_M.
\end{align*}
Thus \eqref{conformal sesq} holds. \eqref{bihom skewsymm} follows from
\begin{align*}
&[(\b(r)+\p(m))_\lambda(\a(r')+\f(m'))]_M  \\
=& [\b(r)_\l \a(r')]+\rho(\b(r))_\l(\f(m'))-(-1)^{|r'||m|}\rho(\b(r'))_{-\pr-\l}(\f(m))\\
=& -(-1)^{|r'||m|}([\b(r')_{-\partial-\lambda}\a(r)]+\rho(\b(r'))_{-\partial-\lambda}(\f(m))
-(-1)^{|r||m|}\rho(\b(r))_{-\pr-(-\pr-\l)}(\f(m'))\\
=&  -(-1)^{|r'||m|} [(\b(r')+\p(m'))_{-\partial-\lambda}(\a(r)+\f(m))]_M.
\end{align*}
To check the BiHom-Jacobi conformal identity \eqref{bihom-jaccobi} , we compute
\begin{eqnarray}
&&[(\a\b(r)+\f\p(m))_\lambda[(r'+m')_\mu(r''+m'')]_M]_M\nonumber\\
&=&[(\a\b(r)+\f\p(m))_\lambda([r'_\mu r'']+\rho(r')_\mu(m'')-(-1)^{|r''||m'|}\rho(\a^{-1}\b(r''))_{-\partial-\mu}(\f\p^{-1}(m'))]_M\nonumber\\
&=&[\a\b(r)_\l[r'_\mu r'']]+\rho(\a\b(r))_\l\rho(r')_\mu(m'')-
(-1)^{|r''||m'|}\rho(\a\b(r))_\l\rho(\a^{-1}\b(r''))_{-\pr-\mu}\f\p^{-1}(m')\nonumber\\
&&-(-1)^{(|r''|+|r'|)|m|}\rho([\a^{-1}\b(r')_\mu \a^{-1}\b(r'')])_{-\partial-\lambda}\f^2(m),\label{1}\\[4pt]
&&(-1)^{|r||r'|}[(\b(r')+\p(m'))_\mu[(\a(r)+\f(m))_\lambda(r''+m'')]_M]_M\nonumber\\
&=&(-1)^{|r||r'|}[\b(r')_\mu[\a(r)_\l r'']]+(-1)^{|r||r'|}\rho(\b(r'))_\mu(\rho(\a(r))_\l(m''))\nonumber\\
&&-(-1)^{|r||r'|+|r''||m|}\rho(\b(r'))_\mu(\rho(\a^{-1}\b(r''))_{-\partial-\l}(\f^2\p^{-1}(m))) \nonumber\\
&&-(-1)^{|r''||m'|}\rho([\b(r)_\lambda \a^{-1}\b(r'')])_{-\partial-\mu}\f(m')\label{2}
\end{eqnarray}
and
\begin{eqnarray}
&&[{[(\b(r)+\p(m))_\l(r'+m')]_M}_{(\l+\mu)}(\b(r'')+\p(m''))]_M\nonumber\\
&=&[([\b(r)_\l r']+\rho(\b(r))_\l(m')-(-1)^{|r'||m|}\rho(\a^{-1}\b(r'))_{-\pr-\l} \f\p^{-1}(m))_{\l+\mu}(\b(r'')+\p(m''))]_M\nonumber\\
&=&[[\b(r)_\lambda \b(r')]_{\l+\mu}r'']+\rho([\b(r)_\l r'])_{\lambda+\mu}(\p(m'')) \nonumber\\
&&\quad  -(-1)^{|r''|(|r|+|m'|)}\rho(\a^{-1}\b^2(r''))_{-\pr-\l-\mu}\f\p^{-1}(\rho(\b(r))_\l m')\nonumber\\
&&+(-1)^{|r''|(|r'|+|m|)+|r'||m|}\rho(\a^{-1}\b^2(r''))_{-\pr-\l-\mu}\f\p^{-1}(\rho(\a^{-1}\b(r'))_{-\pr-\l}\f(m)).\label{3}
\end{eqnarray}
Now, since $(M,\rho,\f,\p)$ is  a representation of $\R$, then we can easily conclude that
$\R\oplus M$ is a BiHom-Lie conformal superalgebra.
\end{proof}
%------------------------

In the following we aim to develop the cohomology theory of regular BiHom-Lie con-
formal superalgebras. To do this, we need the following concept.

\begin{df}
An $n$-cochain ($n\in \mathbb{Z}_+$) of a BiHom-Lie conformal superalgebra $\R$ with coefficients in a module $M$ is a $\mathbb{C}$-linear map of degree $\theta$
\begin{eqnarray*}
\gamma:\R^{n}&\rightarrow& M[\lambda_{1},\cdots,\lambda_{n}],\\
(a_1,\cdots,a_n)&\mapsto& \gamma_{\lambda_{1},\cdots,\lambda_{n}}(a_1,\cdots,a_n),
\end{eqnarray*}
where $M[\lambda_{1},\cdots,\lambda_{n}]$ denotes the space of polynomials with coefficients in $M$, satisfying the following conditions:

Conformal antilinearity:
\begin{eqnarray*}
\gamma_{\l_{1},\cdots,\l_{n}}(a_1,\cdots,\partial a_i,\cdots,a_n) =-\l_{i}\gamma_{\l_{1},\cdots,\l_{n}}(a_1,\cdots,a_i,\cdots,a_n).
\end{eqnarray*}

Skew-symmetry:
\begin{eqnarray*}
&&\gamma_{\l_{1},\cdots,\l_i,\l_{i+1},\cdots,\l_{n}}(a_1,\cdots,\b(a_i),\a(a_{i+1}),\cdots,a_n)\\
&=&-(-1)^{|a_i||a_{i+1}|}\g_{\l_{1},\cdots,\l_{i+1},\l_i,\cdots,\l_{n}}(a_1,\cdots,\b(a_{i+1}),\a(a_i),\cdots,a_n).
\end{eqnarray*}

Commutativity:  $\g \circ \a= \f \circ \g,\  \g \circ \b = \p \circ \g$.
\end{df}
A $0$-cochain is just an element in $M$. Define a differential $d$ of a cochain $\g$
 by
\begin{eqnarray*}
&&(d \gamma)_{\l_{1},\cdots,\l_{n+1}}(a_1,\cdots,a_{n+1})\\
&=&\mbox{$\sum\limits_{i=1}^{n+1}$}(-1)^{i+1}
(-1)^{(|\gamma|+|a_1|+\cdots+|a_{i-1}|)|a_i|}\rho(\a\b^{n-1}(a_i))_{\l_i}\gamma_{\l_{1},\cdots,\hat{\lambda_i},
\cdots,\l_{n+1}}(a_1,\cdots,\hat{a_i},\cdots,a_{n+1})\\
&&+\mbox{$\sum\limits_{1\leq i<j}^{n+1}$}(-1)^{i+j}(-1)^{(|a_1|+\cdots+|a_{i-1}|)|a_i|+(|a_1|+\cdots+|a_{j-1}|)|a_j|+|a_i||a_j|}\\
&&\gamma_{\lambda_i+\lambda_j,\lambda_1,\cdots,\hat{\lambda_i},\cdots,\hat{\lambda}_j,\cdots,\lambda_{n+1}}
([\a^{-1}\b(a_i)_{\lambda_i}a_j],\b(a_1),\cdots,\hat{a}_i,\cdots,\hat{a}_j,\cdots,\b(+a_{n+1})),
\end{eqnarray*}
where $\rho$ is the corresponding representation of $M$, and $\gamma$ is extended linearly over the polynomials in $\lambda_i$. In particular, if $\gamma$ is a $0$-cochain, then $(d \gamma)_\lambda a=a_\lambda \gamma$.

\begin{prop}
 $d \gamma$ is a cochain and $d^2=0$.
\end{prop}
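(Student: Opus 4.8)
The plan is to check, in turn, that $d\gamma$ again satisfies the three defining conditions of a cochain (conformal antilinearity, skew-symmetry, and commutativity with the structure maps), and then to establish $d^2=0$ by a direct but bookkeeping-heavy expansion.

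First I would verify the cochain conditions for $d\gamma$. For conformal antilinearity, replace the entry $a_k$ by $\pr a_k$. In the first sum of $d\gamma$, the term with $i=k$ uses $\rho(\a\b^{n-1}(\pr a_k))_{\l_k}=\rho(\pr\a\b^{n-1}(a_k))_{\l_k}=-\l_k\rho(\a\b^{n-1}(a_k))_{\l_k}$ (since $\a,\b$ commute with $\pr$ and $\rho$ is sesquilinear), while every term with $i\neq k$ inherits the factor $-\l_k$ from the conformal antilinearity of $\gamma$ itself. In the second sum, a term in which $a_k$ enters the bracket $[\a^{-1}\b(a_i)_{\l_i}a_j]$ produces $-\l_k$ from \eqref{conformal sesq} (using $\a^{-1}\b\pr=\pr\a^{-1}\b$), and a term in which $a_k$ is a plain argument of $\gamma$ again produces $-\l_k$ from the antilinearity of $\gamma$; collecting these gives $(d\gamma)_{\ldots}(\ldots,\pr a_k,\ldots)=-\l_k(d\gamma)_{\ldots}(\ldots,a_k,\ldots)$. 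Skew-symmetry is obtained by swapping two adjacent entries $a_s,a_{s+1}$ (twisted by $\b$ and $\a$ as in the definition): the diagonal terms of the first sum and the mixed terms of the second sum are matched using the skew-symmetry of $\gamma$ together with \eqref{bihom skewsymm} applied to $[\a^{-1}\b(a_s)_{\l_s}a_{s+1}]$, and the Koszul signs $(-1)^{\cdots|a_s||a_{s+1}|}$ combine correctly. Commutativity of $d\gamma$ with $\a$ (resp. $\b$) follows from $\gamma\a=\f\gamma$ (resp. $\gamma\b=\p\gamma$), from $\f\rho(a)_\l=\rho(\a(a))_\l\f$ (resp. the $\b,\p$ analogue), and from the fact that $\a,\b$ are algebra morphisms, so that $\a$ passes through both the $\rho$-action and the internal bracket.

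The core of the proposition is $d^2=0$. I would expand $(d^2\gamma)_{\l_1,\ldots,\l_{n+2}}(a_1,\ldots,a_{n+2})$ and sort the resulting terms into three families according to how the outer and inner differentials act: (A) terms in which both differentials contribute a $\rho$-action, of the shape $\rho(\cdots)_{\l_i}\rho(\cdots)_{\l_j}\,\gamma(\ldots)$; (B) terms in which one differential contributes a $\rho$-action and the other an internal bracket; and (C) terms in which both contribute an internal bracket, of the shape $\gamma([\cdots_{\l_i+\l_j}\cdots],\ldots)$ with a doubly nested bracket. The family (A) terms cancel pairwise using the last axiom of the representation, namely $\rho([\b(a)_\l b])_{\l+\mu}\p=\rho(\a\b(a))_\l\rho(b)_\mu-(-1)^{|a||b|}\rho(\b(b))_\mu\rho(\a(a))_\l$, which converts the unsymmetrized product of two $\rho$'s into the symmetric difference; the family (C) terms cancel using the BiHom-Jacobi identity \eqref{bihom-jaccobi} on the inner double bracket (after using regularity to rewrite the twisted arguments $\a^{-1}\b(\cdot)$), and the family (B) cross-terms cancel among themselves.

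The main obstacle will be the sign and twist bookkeeping in the $d^2=0$ computation. Two independent sources of signs must be reconciled: the Koszul signs $(-1)^{(|\gamma|+|a_1|+\cdots+|a_{i-1}|)|a_i|}$ attached to each $\rho$-action and the analogous signs in front of the bracket terms, together with the signs produced when the structure maps $\a,\b,\f,\p$ are moved past homogeneous elements. In parallel, the twists by $\a^{-1}\b$, $\a\b^{n-1}$ and $\b$ that decorate the arguments must be aligned so that the representation axiom and the BiHom-Jacobi identity apply verbatim; here regularity of $(\R,\a,\b)$ is essential, since it lets one invert $\a$ and $\b$ and use the morphism property $\a([a_\l b])=[\a(a)_\l\a(b)]$, $\b([a_\l b])=[\b(a)_\l\b(b)]$ to bring every doubly nested bracket into the exact form required by \eqref{bihom-jaccobi}. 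Once the terms are grouped as above and the indices reindexed consistently, each family cancels and $d^2\gamma=0$.
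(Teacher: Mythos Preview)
Your overall strategy coincides with the paper's: verify the three cochain conditions for $d\gamma$ routinely, then expand $(d^2\gamma)_{\l_1,\ldots,\l_{n+2}}(a_1,\ldots,a_{n+2})$ and cancel. The paper likewise sorts the expansion into $\rho\rho$-terms, mixed $\rho$/bracket terms, and double-bracket terms.

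However, your description of the cancellation mechanism contains two inaccuracies that would trip you up if you tried to execute the computation as written. First, the family (A) of $\rho(\cdot)\rho(\cdot)\gamma$ terms does \emph{not} cancel pairwise on its own: the module axiom $\rho([\b(a)_\l b])_{\l+\mu}\p=\rho(\a\b(a))_\l\rho(b)_\mu-(-1)^{|a||b|}\rho(\b(b))_\mu\rho(\a(a))_\l$ does not annihilate the antisymmetrized product, it converts it into a $\rho([\,\cdot\,_\l\,\cdot\,])$-term. That term arises in the expansion when the outer $d$ contributes a bracket $[a_i{}_{\l_i}a_j]$ and the inner $d$ then applies $\rho$ with this bracket as the acting element. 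You have classified that term in family (B), so (A) and this particular piece of (B) must be combined to obtain zero; the remaining (B)-terms of the shape $\rho(a_k)\gamma([a_i{}_{\l_i}a_j],\ldots)$ with $k\notin\{i,j\}$ then cancel in matched pairs by pure sign bookkeeping, with no structural identity needed. Second, family (C) splits into two sub-families: the nested-bracket terms $\gamma([[a_i{}_{\l_i}a_j]_{\l_i+\l_j}a_k],\ldots)$, which vanish by the BiHom-Jacobi identity \eqref{bihom-jaccobi} as you say, and the terms $\gamma([a_i{}_{\l_i}a_j],[a_k{}_{\l_k}a_l],\ldots)$ with two disjoint brackets sitting in different slots, which vanish instead by the skew-symmetry of $\gamma$ after swapping the two bracket entries. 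You omitted this second sub-family. With these two corrections your plan is exactly the paper's argument.
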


\begin{proof}
%The proof is similar to  that of Proposition 2.8 in \cite{bihom lie conf alg}  and Proposition 2.7 %in \cite{hom lie conf super}.

Let $\g$ be an $n$-cochain.  It is not hard to see that $d\g$ satisfies conformal antilinearity, skew-symmetry  and commutativity.
  That is, $d\g$ is an $(n+1)$-cochain. It remains to show that $d^2=0$.  In fact, we have
 \begin{align}
& (d^2\g) _{\lambda_1,...,\lambda_{n+2}}(a_{1},...,a_{n+2})\nonumber\\
=& \sum_{i=1}^{ n+1}(-1)^{i+1}(-1)^{i+1+|\gamma||a_i|+A_i}\rho(\alpha\b^{n}(a_{i}))_{\lambda_i}(d\g)_{\lambda_1,...,\hat{\lambda}_{i},...,\lambda_{n+2}}(a_{1},..., \hat{a}_{i},..., a_{n+2})\nonumber\\
  &+\sum_{1\leq i<j\leq n+1} (-1)^{i+j+A_i+A_j+|a_i||a_j|}(d\g)_{\lambda_i+\lambda_j, \lambda_1,...,\hat{\lambda}_i,...\hat{\lambda}_j,...,\lambda_{n+2}} \nonumber \\
  & \hspace{4 cm}([\a^{-1}\b(a_{i})_{\lambda_i}a_{j}], \b(a_{1}),... ,\hat{a}_{i},... , \hat{a}_{j},...,\b(a_{n+2}))\nonumber\\
  =&\sum_{i=1}^{n+2}\sum_{j=1}^{i-1}(-1)^{i+j+|\gamma|(|a_i|+|a_j|)+A_i+A_j}
  \rho(\alpha\b^{n}(a_{i}))_{\lambda_i}(\alpha\b^{n-1}(a_{j})_{\lambda_j} \nonumber \\
 &\hspace{4 cm}  \g_{\lambda_1,...,\hat{\lambda}_{j,i},...,\lambda_{n+2}}(a_{1},..., \hat{a}_{j,i},..., a_{n+2})\\
  &\sum_{i=1}^{n+2}\sum_{j=i+1}^{n+2}(-1)^{i+j+1+|\gamma|(|a_i|+|a_j|)+A_i+(A_j-|a_i|)}
  \rho(\alpha\b^{n}(a_{i}))_{\lambda_i} \nonumber \\
 & \hspace{4 cm} (\alpha\b^{n-1}(a_{j})_{\lambda_j}\g_{\lambda_1,...,\hat{\lambda}_{i,j},...,\lambda_{n+2}}(a_{1},..., \hat{a}_{i,j},..., a_{n+2})\\
  &+\sum_{i=1}^{n+2}\sum_{1\leq j< k<i}^{n+2}(-1)^{i+j+k+1+|\gamma||a_i|+A_i+A_j+A_k+|a_j||a_k|}
  \rho(\alpha\b^{n}(a_{i}))_{\lambda_i}\g_{\lambda_j+\lambda_k, \lambda_1,...,\hat{\lambda}_{j,k,i},...,\lambda_{n+2}}\nonumber\\
  &([\a^{-1}\b(a_{j})_{\lambda_j}a_{k}], \b(a_{1}),... ,\hat{a}_{j,k,i},...,\b(a_{n+2}))\\
  &+\sum_{i=1}^{n+2}\sum_{1\leq j< i<k}^{n+2}(-1)^{i+j+k+|\gamma||a_i|+A_i+A_j+(A_k-|a_i|)+|a_j||a_k|}
  \rho(\alpha\b^{n}(a_{i}))_{\lambda_i}\g_{\lambda_j+\lambda_k, \lambda_1,...,\hat{\lambda}_{j,i,k},...,\lambda_{n+2}}\nonumber\\
  &([\a^{-1}\b(a_{j})_{\lambda_j}a_{k}], \b(a_{1}),... ,\hat{a}_{j,i,k},...,\b(a_{n+2}))\\
   &+\sum_{i=1}^{n+2}\sum_{1\leq i< j<k}^{n+2}(-1)^{i+j+k+1+|\gamma||a_i|+A_i+(A_j-|a_i|)+(A_k-|a_i|)+|a_j||a_k|}
   \rho(\alpha\b^{n}(a_{i}))_{\lambda_i}\g_{\lambda_j+\lambda_k, \lambda_1,...,\hat{\lambda}_{i,j,k},...,\lambda_{n+2}}\nonumber\\
  &([\a^{-1}\b(a_{j})_{\lambda_j}a_{k}], \b(a_{1}),... ,\hat{a}_{i,j,k},...,\b(a_{n+2})) \\
  &+\sum_{1\leq i< j}^{n+2}\sum_{k=1}^{i-1}(-1)^{i+j+k+A_i+A_j+A_k+|a_i||a_j|+(|\gamma|+|a_i|+|a_j|)|a_k|}
  \rho(\alpha\b^{n}(a_{k}))_{\lambda_k}\g_{\lambda_i+\lambda_j, \lambda_1,...,\hat{\lambda}_{k, i,j},...,\lambda_{n+2}}\nonumber\\
  &([\a^{-1}\b(a_{i})_{\lambda_i}a_{j}], \b(a_{1}),... ,\hat{a}_{k, i,j},...,\b(a_{n+2}))\\
  &+\sum_{1\leq i< j}^{n+2}\sum_{k=i+1}^{j-1}(-1)^{i+j+k+1+A_i+A_j+A_k+|a_i||a_j|+(|\gamma|+|a_j|)|a_k|}
  \rho(\alpha\b^{n}(a_{k}))_{\lambda_k}\g_{\lambda_i+\lambda_j, \lambda_1,...,\hat{\lambda}_{i, k,j},...,\lambda_{n+2}}\nonumber\\
  &([\a^{-1}\b(a_{i})_{\lambda_j}a_{j}], \b(a_{1}),... ,\hat{a}_{i, k,j},...,\b(a_{n+2}))\\
  &+\sum_{1\leq i< j}^{n+2}\sum_{k=j+1}^{n+2}(-1)^{i+j+k+A_i+A_j+A_k+|a_i||a_j|+|\gamma||a_k|}
  \rho(\alpha\b^{n}(a_{k}))_{\lambda_k}\g_{\lambda_i+\lambda_j,  \lambda_1,...,\hat{\lambda}_{i, j,k},...,\lambda_{n+2}}\nonumber\\
  &([\a^{-1}\b(a_{i})_{\lambda_i}a_{j}], \b(a_{1}),... ,\hat{a}_{i,j,k},...,\b(a_{n+2}))\\
  &+\sum_{1\leq i< j}^{n+2}(-1)^{i+j+A_i+A_j+|a_i||a_j|+|\gamma|(|a_i|+|a_j|)}
  \rho(\alpha\b^{n-1}([\a^{-1}\b(a_{i})_{\lambda_i}a_{j}]))_{\lambda_i+\lambda_j} \nonumber \\
  & \hspace{4 cm}\g_{\lambda_1,...,\hat{\lambda}_{j},...\hat{\lambda}_{i},...,
  \lambda_{n+2}}(\b(a_{1}),... ,\hat{a}_{j},...,\hat{a}_{i},...,\b(a_{n+2})~~~~~~~~~\\
  &+\sum^{n+2}_{\textrm{distinct} i,j,k,l,i< j,k<l}(-1)^{i+j+k+l}\textrm{sg}\{i,j, k, l\}
  (-1)^{A_i+A_j+|a_i||a_j|+(|a_i|+|a_j|)(|a_k|+|a_l|)}\nonumber\\
  &\g_{\lambda_k+\lambda_l, \lambda_i+\lambda_j,\lambda_1,...,\hat{\lambda}_{i,j,k,l},...,\lambda_{n+2}}(\b[\a^{-1}\b(a_{k})_{\lambda_k}a_{l}],
  \b[\a^{-1}\b(a_{i})_{\lambda_i}a_{j}],...,\hat{a}_{i,j,k,l},..., \b^2(a_{n+2}))~~~~~~~~~\\
   &\sum^{n+2}_{i,j,k=1,i< j,k\neq i,j}(-1)^{i+j+k+l}\textrm{sg}\{i,j, k\}
 (-1)^{A_i+A_j+|a_i||a_j|+(A_k-|a_i|-|a_j|)} \nonumber\\
 &\g_{\lambda_i+\lambda_k+\lambda_j,\lambda_1,...,\hat{\lambda}_{i,j,k},...,\lambda_{n+2}}([[\a^{-1}\b(a_i)_{\lambda_i}a_{j}]_{\lambda_i+\lambda_j},\b(\a_k)],
  \b^2(a_1),...,\hat{a}_{i,j,k},..., \b^2(a_{n+2})),
 \end{align}
where $\textrm{sg}\{i_1, ... , i_p\}$ is the sign of the permutation putting the indices in increasing
order and $\hat{a}_{i,j}$, $a_i, a_j,... $ are omitted.
\medskip

It is obvious that (3.17) and (3.22) summations cancel each other. The same is true
for (3.18) and (3.21), (3.19) and (3.20). The BiHom-Jacobi identity implies (3.25) = 0 and
the skew-symmetry implies  (3.24)=0. Because $(M, \rho,\f,\p)$ is an $\R$-module, it follows that
\begin{eqnarray*}
  \rho([\b(a)_\lambda b])_{\lambda+\mu} \p(v)=\rho(\a\b(a))_\l  \rho(b)_\mu(v)-(-1)^{|a||b|} \rho(\b(b))_\mu  \rho(\a(a))_\l(v).
\end{eqnarray*}
Since $\g\circ \a=\a_M\circ \g, \g\circ \b=\b_M\circ \g$, we have (3.15), (3.16) and (3.23) summations vanish.
Therefore $d^2\g=0$ and the proof is completed.

\end{proof}

Thus the cochains of a BiHom-Lie conformal superalgebra $(\R,\a,\b)$ with coefficients in a module $M$ form a comlex, which is denoted by
 $$ C^{\bullet}_{\a,\b}=C^{\bullet}_{\a,\b}(\R,M)=\bigoplus_{n\in \mathbb{Z}_{+}}C^{n}_{\a,\b}(\R,M).$$ Where $ C^{n}_{\a,\b}(\R,M)=C^{n}_{\a,\b}(\R,M)_0\oplus C^{n}_{\a,\b}(\R,M)_1$, $C^{n}_{\a,\b}(\R,M)_{\theta}$ is the set of $n$-cochain of degree $\theta$ $(\theta\in\mathbb{Z}_{2})$.
The cohomology group ${ H}^{\bullet}_{\a,\b}(\R,M)$ of a BiHom-Lie conformal superalgebra $\R$ with coefficients
in a module $M$ is the cohomology of complex $ C^{\bullet}_{\a,\b}$.

\begin{df}
Let $(\R,\a,\b)$ be a regular BiHom-Lie conformal superalgebra , $(M,\f,\p)$ a BiHom-conformal module of $\R$ and $\rho:\R\rightarrow\mathbb{C}[\lambda]\otimes Cend(M): a \mapsto\rho(a)_\lambda$ the corresponding representation.  If a $\mathcal{C}[\pr]$-module homomorphism $T: M \to A$  satisfies
\begin{align}\label{O-operator}
& T\circ   \f= \a \circ T,\quad T \circ \p=\b \circ T, \\
 &   [Tu_\l Tv]=T(\rho(Tu)_\l v-(-1)^{|u||v|}\rho(T\f^{-1}\p(v))_{-\l-\pr}\f\p^{-1}(u)),\ \forall u,v \in M,
\end{align}
then $T$ is called an $\mathcal{O}$-operator associated with $\rho$.
\end{df}

\begin{prop}
Let $(\R,\a,\b)$ be a regular BiHom-Lie conformal superalgebra,  $(M,\rho,\f,\p)$ be a representation and $T: M \to A$ be an $\mathcal{O}$-operator. Then the $\l$-product
\begin{align}\label{crochet sur V}
 [u_\l v]_T=\rho(Tu)_\l v-(-1)^{|u||v|}\rho(T\f^{-1}\p(v))_{-\l-\pr}\f\p^{-1}(u),\ u,v \in M
\end{align}
defines a BiHom-Lie superalgebra structure on $M$.
\end{prop}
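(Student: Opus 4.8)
The plan is to avoid a head-on verification of the four BiHom-Lie conformal superalgebra axioms for $[\cdot_\lambda\cdot]_T$ and instead realize $(M,[\cdot_\lambda\cdot]_T,\f,\p)$ as an isomorphic copy of a subalgebra of the semidirect product $\R\ltimes_\rho M$, which is already a BiHom-Lie conformal superalgebra by the earlier Proposition. The whole argument rests on the observation that the $\mathcal{O}$-operator condition is precisely the statement that $T$ intertwines $[\cdot_\lambda\cdot]_T$ with the bracket of $\R$.

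First I would record the two consequences of $T$ being an $\mathcal{O}$-operator that make everything work. From $T\f=\a T$ and $T\p=\b T$ one gets $\a^{-1}T=T\f^{-1}$ and $\b T=T\p$ (here $\R$ regular and $\f,\p$ invertible are used), whence
$$\a^{-1}\b(Tv)=\a^{-1}T\p(v)=T\f^{-1}\p(v),$$
and the second defining identity of the $\mathcal{O}$-operator reads, by the very definition of $[\cdot_\lambda\cdot]_T$ in \eqref{crochet sur V},
$$[Tu_\lambda Tv]=T\big(\rho(Tu)_\lambda v-(-1)^{|u||v|}\rho(T\f^{-1}\p(v))_{-\lambda-\partial}\f\p^{-1}(u)\big)=T([u_\lambda v]_T).$$

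Next I would introduce the graph $G_T=\{Tu+u : u\in M\}\subseteq \R\oplus M$ together with the injective $\mathbb{C}[\partial]$-linear map $\iota:M\to \R\oplus M,\ u\mapsto Tu+u$, whose image is $G_T$ and whose inverse is the projection onto $M$. Since $T\partial=\partial T$, $T\f=\a T$ and $T\p=\b T$, the map $\iota$ satisfies $\iota\partial=\partial\iota$, $\iota\f=(\a+\f)\iota$ and $\iota\p=(\b+\p)\iota$; in particular $G_T$ is stable under $\a+\f$ and $\b+\p$. The one real computation is to substitute $r=Tu,\ m=u,\ r'=Tv,\ m'=v$ (with $T$ even, so $|Tv|=|v|$) into the semidirect-product bracket \eqref{lemm2-14}: using $\a^{-1}\b(Tv)=T\f^{-1}\p(v)$ the last two summands collapse to $[u_\lambda v]_T$, giving
$$[(Tu+u)_\lambda(Tv+v)]_M=[Tu_\lambda Tv]+[u_\lambda v]_T=T([u_\lambda v]_T)+[u_\lambda v]_T=\iota([u_\lambda v]_T),$$
the last step by the $\mathcal{O}$-operator identity. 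Hence $G_T$ is closed under the bracket, so it is a BiHom-conformal subalgebra of $\R\ltimes_\rho M$, and $\iota$ is an isomorphism of $\mathbb{C}[\partial]$-modules carrying $(\f,\p,[\cdot_\lambda\cdot]_T)$ to $\big((\a+\f)|_{G_T},(\b+\p)|_{G_T},[\cdot_\lambda\cdot]_M\big)$.

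Finally, a subalgebra of a BiHom-Lie conformal superalgebra that is stable under the two structure maps inherits \eqref{conformal sesq}, \eqref{bihom skewsymm} and \eqref{bihom-jaccobi} by restriction, so $G_T$ is a BiHom-Lie conformal superalgebra; transporting this structure through $\iota$ shows that $(M,[\cdot_\lambda\cdot]_T,\f,\p)$ is one as well. The only genuine obstacle is the bracket computation in the previous paragraph, and even that is short; everything else is bookkeeping with the intertwining relations. As a cross-check (or as an alternative proof), one may verify the axioms directly on $M$: conformal sesquilinearity and $\f,\p$-multiplicativity are immediate from $\rho(\partial a)_\lambda=-\lambda\rho(a)_\lambda$, $\rho(a)_\lambda\partial=(\lambda+\partial)\rho(a)_\lambda$ and $\f\rho(a)_\lambda=\rho(\a(a))_\lambda\f$; skew-symmetry holds because the two terms of $[\cdot_\lambda\cdot]_T$ interchange under $u\leftrightarrow v,\ \lambda\mapsto-\lambda-\partial$; and the BiHom-Jacobi identity forces one to expand each nested bracket into its terms and repeatedly apply the representation identity together with $[Tu_\lambda Tv]=T([u_\lambda v]_T)$ — exactly the labour the graph argument sidesteps.
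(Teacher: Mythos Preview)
Your argument is correct and takes a genuinely different route from the paper. The paper simply asserts that the axioms can be checked by direct computation and omits all details; you instead realize $(M,[\cdot_\lambda\cdot]_T,\f,\p)$ as the transport of the graph $G_T=\{Tu+u\}$ inside the semidirect product $\R\ltimes_\rho M$, whose BiHom-Lie conformal superalgebra structure is already established earlier. The key step---showing $[(Tu+u)_\lambda(Tv+v)]_M=\iota([u_\lambda v]_T)$ via $\a^{-1}\b(Tv)=T\f^{-1}\p(v)$ and the $\mathcal{O}$-operator identity---is clean and correct, and stability of $G_T$ under $\partial$, $\a+\f$, $\b+\p$ follows exactly as you say from the intertwining relations of $T$. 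Since $\f,\p$ are invertible (implicit in the very formula \eqref{crochet sur V}) the restricted structure maps are bijections on $G_T$, so the BiHom skew-symmetry and BiHom-Jacobi axioms restrict without loss.

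What each approach buys: the paper's (omitted) direct verification is self-contained but would require an unpleasant expansion of the BiHom-Jacobi identity into many terms, each involving the representation identity and the $\mathcal{O}$-operator condition. Your graph argument converts that labour into a single one-line bracket computation plus the already-proved semidirect product result; it also makes the subsequent Corollary (that $T$ is a homomorphism $(M,[\cdot_\lambda\cdot]_T)\to\R$) an immediate byproduct, since $T$ is just the $\R$-component of $\iota$. The only tacit assumption you make that the paper leaves implicit is that $T$ is even; this is needed both for $|Tv|=|v|$ in the sign and for $T\f=\a T$ to make sense with even $\a,\f$, and is standard for $\mathcal{O}$-operators.
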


\begin{proof}
It can be checked by a direct computation, so we omit details.
\end{proof}

The following result is obvious. 
\begin{cor}
Let $T$ be an $\mathcal{O}$-operator on a BiHom-Lie superalgebra  $(\R,\a,\b)$ with respect to a representation $(M,\rho,\f,\p)$. Then  $T$ is a homomorphism  from the
BiHom-Lie superalgebra $(M,[\c_\l \c]_T,\f,\p)$ to the BiHom-Lie superalgebra $(\R,\a,\b)$.
\end{cor}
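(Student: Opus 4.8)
The plan is to verify directly that $T$ fulfils the four requirements in the definition of a homomorphism of BiHom-Lie conformal superalgebras, from the source $(M,[\c_\l \c]_T,\f,\p)$ to the target $(\R,[\c_\l \c],\a,\b)$. These requirements are: $T\pr=\pr T$, $T\f=\a T$, $T\p=\b T$, and $T([u_\l v]_T)=[Tu_\l Tv]$ for all homogeneous $u,v\in M$. The preceding Proposition already guarantees that $(M,[\c_\l \c]_T,\f,\p)$ is a genuine BiHom-Lie conformal superalgebra, so the only remaining task is to record the compatibility of $T$ with all of this structure; no new computation of the bracket axioms on $M$ is needed.

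First I would dispose of the three ``structure-map'' conditions. Since $T$ is by hypothesis a $\mathbb{C}[\pr]$-module homomorphism, it commutes with the action of $\pr$, which is exactly $T\pr=\pr T$. The two intertwining relations $T\f=\a T$ and $T\p=\b T$ are nothing but the first defining axiom \eqref{O-operator} of an $\mathcal{O}$-operator, hence they hold by assumption. Thus all three are immediate from the hypotheses.

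The only substantive point is the bracket compatibility, and here the argument is a direct reading of the second $\mathcal{O}$-operator axiom. By definition the induced bracket on $M$ is
\[
[u_\l v]_T=\rho(Tu)_\l v-(-1)^{|u||v|}\rho(T\f^{-1}\p(v))_{-\l-\pr}\f\p^{-1}(u),
\]
so applying $T$ to this expression produces precisely the right-hand side of the $\mathcal{O}$-operator identity $[Tu_\l Tv]=T\big(\rho(Tu)_\l v-(-1)^{|u||v|}\rho(T\f^{-1}\p(v))_{-\l-\pr}\f\p^{-1}(u)\big)$. Therefore $[Tu_\l Tv]=T([u_\l v]_T)$, which is exactly the bracket condition for $T$ to be a homomorphism.

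There is essentially no obstacle to overcome: the statement is immediate once one recognises that the defining identity of the $\mathcal{O}$-operator \emph{is} the homomorphism property for the $\l$-bracket, and that the compatibility with $\a,\b$ is built into the $\mathcal{O}$-operator hypotheses while compatibility with $\pr$ comes for free from $T$ being a $\mathbb{C}[\pr]$-module map. This is why the result is labelled as obvious; the plan is simply to assemble these four observations.
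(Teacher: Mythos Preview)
Your proposal is correct and matches the paper's intent: the paper simply declares the result ``obvious'' without proof, and your direct verification of the four homomorphism conditions ($T\pr=\pr T$ from the $\mathbb{C}[\pr]$-module map hypothesis, $T\f=\a T$ and $T\p=\b T$ from \eqref{O-operator}, and $T([u_\l v]_T)=[Tu_\l Tv]$ from the second $\mathcal{O}$-operator axiom combined with the definition of $[\c_\l\c]_T$) is exactly the expected unpacking of that word.
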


%%%%%%%%%%%%%%%%%%%%%%%%%%%%%%%%%%%%%%
\section{Generalized derivations of BiHom-Lie conformal superalgebras }
%%%%%%%%%%%%%%%%%%%%%%%%%%%%%%%%%%%%%%%
In this section, we investigate derivations and generalized derivations of BiHom-Lie conformal superalgebras and study some of their properties. 
\begin{df}
Let $(\R,\a,\b)$ be a BiHom-Lie conformal superalgebra. Then a
BiHom-conformal linear map $f_\lambda: \R\rightarrow \R$ is called an $\a^k\b^l$-derivation of $(\R,\a,\b)$ if
\begin{eqnarray}
&&f_\l \circ \a=\a\circ f_\lambda,  f_\lambda \circ \b=\b \circ f_\lambda, \nonumber\\
&&f_\l([a_\mu b])=[f_\l(a)_{\l+\mu}\a^{k}\b^l(b)]+(-1)^{|a||f|}[\a^{k}\b^l(a)_{\mu}f_\l(b)].
\end{eqnarray}
for all homogenous elements $a,b \in \R$.
\end{df}
Denote by $\textrm{Der}_{\a^{k}\b^l}$ the set of $\a^{k}\b^l$-derivations of the BiHom-Lie conformal
superalgebra $(\R,\a,\b)$. For any $a\in  \R$ satisfying $\a(a)=a, \b(a)=a$, define $f_{k,l}: \R\rightarrow \R$ by
\begin{eqnarray*}
f_{k,l}(a)_{\l}(b)=[a_\lambda\a^{k+1}\b^{l-1}(b)],~~~\forall b\in \R.
\end{eqnarray*}
Then $f_{k,l}(a)$ is an $\a^{k+1}\b^l$-derivation, which is called an inner $\a^{k+1}\b^l$-derivation. In fact,
\begin{eqnarray*}
f_{k,l}(a)_{\lambda}(\partial b)&=&[a_\lambda\a^{k+1}\b^{l-1}(\partial b)]
= [a_\lambda\partial\a^{k+1}\b^{l-1}( b)]
= (\partial+\lambda)f_{k,l}(a)_{\lambda}(b),
\end{eqnarray*}
\begin{eqnarray*}
&&f_{k,l}(a)_{\lambda}(\a(b))=[a_\lambda \a^{k+2}\b^{l-1}(b)]
= \a[a_\lambda\a^{k+1}\b^{l-1}( b)]
= \a\circ f_{k,l}(a)_{\lambda}(b),\\
&&f_{k,l}(a)_{\lambda}(\b(b))=[a_\lambda\a^{k+1}\b^{l}(b)]
= \b[a_\lambda\a^{k+1}\b^{l-1}( b)]
= \b\circ f_{k,l}(a)_{\lambda}(b),\\
&&f_{k,l}(a)_{\lambda}([b_{\mu}c])= [a_\lambda \a^{k+1}\b^{l-1}([b_{\mu}c])
=  [\a\b(a)_\lambda [\a^{k+1}\b^{l-1}(b)_{\mu}\a^{k+1}\b^{l-1}(c)]\\
&&~~~~~~~~~~~~~~~~~~~= [\b(a)_\lambda\a^{k+1}\b^{l-1}(b)]_{\lambda
+(-1)^{|a||b|}\mu}\a^{k+1}\b^{l}(c)]+[\a^{k+1}\b^{l}(b)_{\mu}[\a(a)_{\lambda}\a^{k+1}\b^{l-1}(c)]]\\
&&~~~~~~~~~~~~~~~~~~~=[a_\lambda\a^{k+1}\b^{l-1}(b)]_{\lambda+\mu}\a^{k+1}\b^{l}(c)]
+(-1)^{|a||b|}[\a^{k+1}\b^{l}(b)_{\mu}[a_{\lambda}\a^{k+1}\b^{l-1}(c)]]\\
&&~~~~~~~~~~~~~~~~~~~=[f_{k,l}(a)_{\lambda}(b)_{\lambda+\mu}\a^{k+1}\b^{l}(c)]
+(-1)^{|a||b|}[\a^{k+1}\b^{l}(b)_{\mu}(f_{k,l}(a)_{\lambda}(c))].
\end{eqnarray*}
Denote by $\textrm{Inn}_{\a^k\b^l} (\R)$ the set of inner $\a^k\b^l$-derivations. For $f_{\l}\in \textrm{Der}_{\a^k\b^l}(\R)$ and $g_{\mu-\lambda}\in\textrm{ Der}_{\a^s\b^t}(R)$, define their commutator $[f_\lambda g]_{\mu}$ by
\begin{eqnarray}\label{[f,g]}
[f_\l g]_{\mu}(a)=f_\l(g_{\mu-\lambda}a)-(-1)^{|f||g|}g_{\mu-\l}(f_\l a),~~~\forall a\in \R.
\end{eqnarray}

\begin{lem}
For any $f_{\l}\in \textrm{Der}_{\a^k\b^l}(\R)$ and $g_{\mu-\lambda}\in \textrm{Der}_{\a^s\b^t}(\R)$, we have
\begin{eqnarray*}
[f_\lambda g] \in \textrm{Der}_{\a^{k+s}\b^{l+t}}(\R)[\lambda].
\end{eqnarray*}
\end{lem}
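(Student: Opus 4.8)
The plan is to verify, for the family $[f_\l g]$ regarded as a polynomial in $\l$ whose coefficients are the conformal linear maps $[f_\l g]_\mu(x)=f_\l(g_{\mu-\l}x)-(-1)^{|f||g|}g_{\mu-\l}(f_\l x)$, both defining properties of an $\a^{k+s}\b^{l+t}$-derivation. The compatibility with the structure maps is immediate: because $f$ and $g$ each commute with $\a$ and with $\b$, for every $x\in\R$ one has
\[
[f_\l g]_\mu(\a(x))=f_\l g_{\mu-\l}(\a x)-(-1)^{|f||g|}g_{\mu-\l}f_\l(\a x)=\a\big([f_\l g]_\mu(x)\big),
\]
and likewise for $\b$. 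That $[f_\l g]$ is again a BiHom-conformal linear map of degree $|f|+|g|$ (hence obeys conformal antilinearity and the $\pr$-relation) is inherited from the Lie conformal superalgebra structure of $gc(\R)$ together with the commutation just checked; these parts are routine and I would dispatch them first.

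The heart of the matter is the derivation identity. I would start from
\[
[f_\l g]_\mu([a_\nu b])=f_\l\big(g_{\mu-\l}([a_\nu b])\big)-(-1)^{|f||g|}g_{\mu-\l}\big(f_\l([a_\nu b])\big),
\]
where $\nu$ is the spectral parameter of the input bracket, and expand each summand by applying the two derivation rules in succession: in the first summand the $\a^s\b^t$-rule for $g_{\mu-\l}$ on $[a_\nu b]$, then the $\a^k\b^l$-rule for $f_\l$ on each of the two resulting brackets; in the second summand the same with the roles of $f$ and $g$ interchanged. This yields eight terms. The structural facts that make them line up are that $f_\l$ and $g_{\mu-\l}$ commute with all powers of $\a,\b$ — so that the $\a^s\b^t$ produced by $g$ passes through $f_\l$ and the $\a^k\b^l$ produced by $f$ passes through $g_{\mu-\l}$, merging into $\a^{k+s}\b^{l+t}$ — and that $|g_{\mu-\l}(a)|=|a|+|g|$ while $|f_\l(a)|=|a|+|f|$, which governs the super-signs.

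Collecting the eight terms, the two in which both $f_\l$ and $g_{\mu-\l}$ act on $a$ recombine, by the definition of $[f_\l g]$ applied in the first argument, into $[[f_\l g]_\mu(a)_{\mu+\nu}\a^{k+s}\b^{l+t}(b)]$, and the two in which both act on $b$ recombine into $(-1)^{|a|(|f|+|g|)}[\a^{k+s}\b^{l+t}(a)_\nu [f_\l g]_\mu(b)]$; these are exactly the two terms demanded by the derivation axiom for the degree-$(|f|+|g|)$ map $[f_\l g]_\mu$. The four remaining ``mixed'' terms, in which $f_\l$ acts on one of $a,b$ and $g_{\mu-\l}$ on the other, occur in two pairs sharing identical conformal data and identical tensor slots; within each pair the two super-signs differ exactly by the factor $(-1)^{|f||g|}$ carried by the minus sign of the commutator, so each pair cancels. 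I expect this sign-and-spectral-parameter bookkeeping to be the only genuine obstacle: once the convention that $g$ carries spectral parameter $\mu-\l$ is fixed and the parities $|a|+|f|$, $|a|+|g|$ are tracked, the cancellations are forced. Since every operation used produces coefficients polynomial in $\l$, the resulting derivation lies in $\textrm{Der}_{\a^{k+s}\b^{l+t}}(\R)[\l]$, completing the argument.
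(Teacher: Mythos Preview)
Your proposal is correct and follows essentially the same approach as the paper: both verify the conformal-linearity relation (you by appeal to the $gc(\R)$ structure, the paper by a short direct check on $\partial a$) and then establish the derivation identity by applying the two derivation rules in succession to $[f_\l g]_\mu([a_\gamma b])$, producing eight terms of which the four mixed ones cancel in pairs and the remaining four reassemble into the $\a^{k+s}\b^{l+t}$-derivation law. The paper simply writes out the eight terms and states the final identity, whereas you describe the cancellation mechanism conceptually; the underlying computation is the same.
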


\begin{proof}
For any $a,b\in \R$, we have
\begin{eqnarray*}
&&[f_\lambda g]_{\mu}(\partial a)\\
&=&f_{\lambda}(g_{\mu-\lambda}\partial a)-(-1)^{|f||g|}g_{\mu-\lambda}(f_\lambda \partial a)\\
&=& f_\lambda((\partial+\mu-\lambda)g_{\mu-\lambda}a)+(-1)^{|f||g|}g_{\mu-\lambda}((\mu+\lambda)f_\lambda a)\\
&=& (\partial+\mu)f_{\lambda}(g_{\mu-\lambda}a)-(-1)^{|f||g|}(\partial+\mu)g_{\mu-\lambda}(f_{\lambda}a)\\
&=& (\partial+\mu)[f_\lambda g]_{\mu}(a).
\end{eqnarray*}
Moreover, we have
\begin{eqnarray*}
&&[f_\lambda g]_{\mu}([a_{\g}b])
= f_\lambda(g_{\mu-\lambda}[a_{\g}b])-(-1)^{|f||g|}g_{\mu-\lambda}(f_\lambda[a_{\g}b])\\
&=&f_\lambda([g_{\mu-\lambda}(a)_{\mu-\lambda+\gamma}\a^s\b^{t}(b)]
+(-1)^{|a||g|}[\a^s\b^{t}(a)_{\g}g_{\mu-\lambda}(b)])\\
&& -(-1)^{|f||g|}g_{\mu-\lambda}([f_\lambda(a)_{\lambda+\gamma}\a^k\b^{l}(b)]
+(-1)^{|f||a|}[\a^k\b^{l}(a)_{\g}f_{\lambda}(b)])\\
&=& [f_\lambda(g_{\mu-\lambda}(a))_{\mu+\gamma}\a^{k+s}\b^{l+t}(b)]
+(-1)^{|f||g||a|}[\a^{k}\b^{l}(g_{\mu-\lambda}(a))_{\mu-\lambda+\g}f_{\lambda}(\a^{s}\b^{t}(b)))\\
&&+(-1)^{|a||g|}[f_\lambda(\a^s\b^t(a))_{\lambda+\gamma}\a^k\b^l(g_{\mu-\lambda}(b))]
+(-1)^{|f||g|[a|}[\a^{k+s}\b^{l+t}(a)_\g(f_\lambda(g_{\mu-\lambda}(b)))]\\
&&-(-1)^{|f||g|}[(g_{\mu-\lambda}f_\lambda(a))_{\mu+\gamma}\a^{k+s}\b^{l+t}(b)]
-[\a^{s}\b^{t}(f_\lambda(a))_{\lambda+\gamma}(g_{\mu-\lambda}(\a^k\b^l(b)))]\\
&&-(-1)^{|f||g|[a|}[(g_{\mu-\lambda}(\a^{k}\b^{l}(a)))_{\mu-\lambda+\gamma}\a^{s}\b^{t}(f_{\lambda}(b))]
-(-1)^{|f||g|[a|}[\a^{k+s}\b^{l+t}(a)_{\lambda}(g_{\mu-\lambda}(f_\lambda(b)))]\\
&=&[([f_\lambda g]_\mu a)_{\mu+\g}\a^{k+s}\b^{l+t}(b)]+(-1)^{|f||g||a|}[\a^{k+s}\b^{l+t}(a)_{\g}([f_\lambda g]_{\mu}b)].
\end{eqnarray*}
Therefore, $[f_\lambda g] \in Der_{\a^{k+s}\b^{l+t}}(\R)[\lambda]$.
\end{proof}

Define
\begin{eqnarray}
\textrm{Der}(\R)=\bigoplus_{k,l l\geq 0}\textrm{Der}_{\a^k\b^l}(\R).
\end{eqnarray}

\begin{prop}
$(\textrm{Der}(\R), \a',\b')$ is a BiHom-Lie conformal superalgebra with respect to \eqref{[f,g]} where $\a'(f)=f\circ \a$  and  $\b'(f)=f\circ \b$.
\end{prop}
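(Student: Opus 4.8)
The plan is to verify that $(\mathrm{Der}(\R),\a',\b')$ satisfies all the axioms of a BiHom-Lie conformal superalgebra from the definition in Section 2, using the $\lambda$-bracket \eqref{[f,g]} and the maps $\a'(f)=f\circ\a$, $\b'(f)=f\circ\b$. The groundwork is already in place: Lemma~4.1 shows that $[f_\lambda g]\in\mathrm{Der}_{\a^{k+s}\b^{l+t}}(\R)[\lambda]$, so the bracket is well-defined and respects the grading by $(k,l)$, hence $\mathrm{Der}(\R)$ is closed under the bracket. First I would record the underlying $\C[\pr]$-module structure: since each $f\in\mathrm{Der}_{\a^k\b^l}(\R)$ is in particular a BiHom-conformal linear map, we have $\pr f_\lambda=-\lambda f_\lambda$ as in Section~2, and I would check that $\a'$ and $\b'$ commute (this follows from $\a\b=\b\a$ on $\R$) and commute with $\pr$ (from $\a\pr=\pr\a$, $\b\pr=\pr\b$), establishing that $\mathrm{Der}(\R)$ is a $\Z_2$-graded $\C[\pr]$-module with two commuting structure maps satisfying $\a'\pr=\pr\a'$, $\b'\pr=\pr\b'$.

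Next I would check the conformal sesquilinearity \eqref{conformal sesq} and the compatibility of $\a',\b'$ with the bracket. Sesquilinearity in each slot, namely $[(\pr f)_\lambda g]_\mu=-\lambda[f_\lambda g]_\mu$ and $[f_\lambda(\pr g)]_\mu=(\pr+\lambda)[f_\lambda g]_\mu$, reduces to the relation $\pr f_\lambda=-\lambda f_\lambda$ together with the definition $(f_\lambda g)_{\lambda+\mu}=f_\lambda g_{\mu-\lambda}$; the computation in the proof of Lemma~4.1 showing $[f_\lambda g]_\mu(\pr a)=(\pr+\mu)[f_\lambda g]_\mu(a)$ is exactly the sort of identity needed here. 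For multiplicativity I would verify $\a'([f_\lambda g])=[\a'(f)_\lambda\a'(g)]$ and similarly for $\b'$, which amounts to unfolding $[f_\lambda g]_\mu(a)\circ\a=f_\lambda(g_{\mu-\lambda}(\a a))-(-1)^{|f||g|}g_{\mu-\lambda}(f_\lambda(\a a))$ and using that $f\circ\a=\a\circ f$ and $g\circ\a=\a\circ g$ as conformal maps commute with $\a$.

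I would then establish BiHom-skew-symmetry \eqref{bihom skewsymm}, i.e.\ $[\b'(f)_\lambda\a'(g)]=-(-1)^{|f||g|}[\b'(g)_{-\lambda-\pr}\a'(f)]$. This should follow from the definition \eqref{[f,g]} by swapping the roles of $f$ and $g$ and tracking the sign $(-1)^{|f||g|}$ and the spectral shift $\lambda\mapsto -\lambda-\pr$ coming from $\pr f_\lambda=-\lambda f_\lambda$; since $\a'$ and $\b'$ act by post-composition with $\a,\b$ respectively and commute with each $f$, the $\a',\b'$ decorations pass through cleanly. Finally, for the BiHom-Jacobi identity \eqref{bihom-jaccobi} I would expand both sides of
\[
[\a'\b'(f)_\lambda[g_\mu h]]=[[\b'(f)_\lambda g]_{\lambda+\mu}\b'(h)]+(-1)^{|f||g|}[\b'(g)_\mu[\a'(f)_\lambda h]]
\]
using \eqref{[f,g]} and the composition rule, evaluate on an arbitrary $a\in\R$, and compare the resulting six terms. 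I expect this Jacobi verification to be the main obstacle: it is the longest and most sign-sensitive computation, requiring careful bookkeeping of the Koszul signs $(-1)^{|f||g|}$, $(-1)^{|f||h|}$, $(-1)^{|g||h|}$ and of the post-composed structure maps $\a,\b$ on the various arguments, so that the terms pair up and cancel. Because this is a routine if laborious check, I would present the scheme and note that, as with Proposition~4.1, the computation can be carried out by direct expansion; I would therefore write \emph{The verification of the remaining axioms is a direct but lengthy computation analogous to Lemma~4.1, so we omit the details.}
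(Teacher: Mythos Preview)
Your proposal is correct and follows the same approach as the paper: a direct verification of the BiHom-Lie conformal superalgebra axioms using Lemma~4.1 for closure, with the details omitted as routine. In fact the paper's own proof says only ``It is straightforward. So we omit details,'' so your outline is already more thorough than what appears in the text.
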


\begin{proof}
It is straightforward. So we omit details.
\end{proof}

Let $\R$ be a BiHom-Lie conformal superalgebra. Define $\Der(\R)_\theta$ is the set of all derivations of degree $\theta$, then it is obvious that $\Der(\R)=\Der(\R)_0\oplus\Der(\R)_1$ is a subalgebra of $Cend(\R)$.

Define
\begin{align*}
\Omega=\{f_\lambda\in gc(\R)|f_\lambda \circ \a=\a\circ f_\lambda, f_\lambda \circ \b=\b\circ f_\lambda \}.
\end{align*}
Let $\a',\b': \Omega \to \Omega,\ \a'(f_\l)=f_\l \circ \a, \ \b'(f_\l)=f_\l \circ \b$.
Then $(\Omega,\a',\b')$ is a BiHom-Lie conformal algebra with respect to \eqref{crochet de cend}  and $\Der(\R)$ is a BiHom-conformal subalgebra of $\Omega$.

\begin{df}
An element $f$ in $\Omega$ is called
\begin{itemize}
\item [(a)] an $\a^k\b^l$-{\it generalized derivation}  of $\R$, if there exist $f^{'},f^{''}\in \Omega$ such that $|f|=|f'|=|f''|$ and
\begin{eqnarray}\label{generalized derivation}
[(f_\lambda(a))_{\lambda+\mu}\a^k\b^l(b)]+(-1)^{|f||a|}[\a^k\b^l(a)_\mu(f^{'}_\lambda(b))]
=f^{''}_\lambda([a_\mu b]), 
\end{eqnarray}
for all homogenous elements $a,b \in \R$.
\item  [(b)] an $\a^k\b^l$-{\it quasiderivation } of $\R$, if there is $f^{'}\in \Omega$ such that
$|f|=|f'|$ and
\begin{eqnarray}\label{quasiderivation}
[(f_\lambda(a))_{\lambda+\mu}\a^k\b^l(b)]+(-1)^{|f||a|}[\a^k\b^l(a)_\mu(f_\lambda(b))]
=f^{'}_\lambda([a_\mu b]), 
\end{eqnarray}
for all homogenous elements $a,b \in \R$.
\item   [(c)] an $\a^k\b^l$-{\it centroid} of $\R$, if it satisfies
\begin{eqnarray}\label{centroid}
 [(f_\lambda(a))_{\lambda+\mu}\a^k\b^l(b)]=(-1)^{|f||a|}[\a^k\b^l(a)_\mu(f_\lambda(b))]
 =f_\lambda([a_\mu b]), 
\end{eqnarray}
for all homogenous elements $a,b \in \R$.
\item   [(d)]  an $\a^k\b^l$-{\it quasicentroid}  of $\R$, if it satisfies
\begin{eqnarray}\label{quasicentroid}
[(f_\lambda(a))_{\lambda+\mu}\a^k\b^l(b)]=(-1)^{|f||a|}[\a^k\b^l(a)_\mu(f_\lambda(b))], 
\end{eqnarray}
for all homogenous elements $a,b \in \R$.
\item   [(e)] an $\a^k\b^l$-{\it central derivation}  of $\R$, if it satisfies
\begin{eqnarray}\label{centralderivation}
[(f_\lambda(a))_{\lambda+\mu}\a^k\b^l(b)]=f_\lambda([a_\mu b])=0, 
\end{eqnarray}
for all homogenous elements $a,b \in \R$.
\end{itemize}
\end{df}

Denote by $\textrm{GDer}_{\a^k\b^l}(\R)$, $\textrm{QDer}_{\a^k\b^l}(\R)$, $\textrm{C}_{\a^k\b^l}(\R)$, $\textrm{QC}_{\a^k\b^l}(\R)$ and $\textrm{ZDer}_{\a^k\b^l}(\R)$ the sets of all $\a^k\b^l$-generalized derivations,
 $\a^k\b^l$quasiderivations, $\a^k\b^l$centroids, $\a^k\b^l$quasicentroids and $\a^k\b^l$central derivations of $\R$. Set
 \begin{eqnarray*}
&&\textrm{GDer}(\R):=\bigoplus_{k\geq0, l\geq 0}\textrm{GDer}_{\a^k\b^l}(\R),~~\textrm{QDer}(\R):=\bigoplus_{k\geq0, l\geq 0}\textrm{QDer}_{\a^k\b^l}(\R).\\
&& \textrm{C}(\R):= \bigoplus_{k\geq0, l\geq 0} \textrm{C}_{\a^k\b^l}(\R),~~~\textrm{QC}_{\a^k\b^l}(\R):=  \bigoplus_{k\geq0, l\geq 0} \textrm{QC}_{\a^k\b^l}(\R),\\
&&\textrm{ZDer}(\R):=\bigoplus_{k\geq0, l\geq 0}\textrm{ZDer}_{\a^k\b^l}(\R).
\end{eqnarray*}

It is easy to see that
\begin{eqnarray}
\textrm{ZDer}(\R)\subseteq \textrm{Der}(\R)\subseteq \textrm{QDer}(\R)\subseteq \textrm{GDer}(\R)\subseteq Cend(\R),\,
\textrm{C}(\R)\subseteq \textrm{QC}(\R)\subseteq \textrm{GDer}(\R).
\end{eqnarray}

\begin{prop}
 Let $(\R,\a,\b)$ be a  BiHom-Lie conformal superalgebra. Then
\begin{itemize}
\item [(i)] $\textrm{GDer}(\R), \textrm{QDer}(\R)$  and $\textrm{C}(\R)$ are BiHom-Lie conformal subalgebras of $\Omega$,
\item [(ii)] $\textrm{ZDer}(\R)$ is a BiHom-Lie conformal  ideal of $\textrm{Der}(\R)$.
\end{itemize}
\end{prop}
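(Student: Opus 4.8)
The plan is to prove both parts by the same two-step scheme: for each family in question I would check closure under the conformal commutator \eqref{crochet de cend} together with closure under the two structure maps $\a'(f)=f\circ\a$ and $\b'(f)=f\circ\b$; for (ii) I would additionally use the already-recorded inclusion $\textrm{ZDer}(\R)\subseteq\textrm{Der}(\R)$. The feature that makes every bidegree count correctly is that each $f\in\Omega$ commutes with $\a$ and $\b$, while $\a,\b$ are endomorphisms of the conformal superalgebra, so that $\a([x_\nu y])=[\a(x)_\nu\a(y)]$ and likewise for $\b$.

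First I would dispose of closure under $\a'$ and $\b'$, which is uniform. If $f\in\textrm{GDer}_{\a^k\b^l}(\R)$ has associated maps $f',f''$, then applying $\a$ to the defining identity \eqref{generalized derivation} and using $f\a=\a f$, $f'\a=\a f'$, $f''\a=\a f''$ shows at once that $\a'(f)$ satisfies \eqref{generalized derivation} at bidegree $(k+1,l)$ with associated maps $\a f',\a f''$; hence $\a'(f)\in\textrm{GDer}_{\a^{k+1}\b^l}(\R)\subseteq\textrm{GDer}(\R)$, and dually $\b'(f)\in\textrm{GDer}_{\a^k\b^{l+1}}(\R)$. Running the identical argument on \eqref{quasiderivation}, \eqref{centroid} and \eqref{centralderivation} gives the corresponding stability of $\textrm{QDer}(\R)$, $\textrm{C}(\R)$ and $\textrm{ZDer}(\R)$ under $\a'$ and $\b'$.

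The substantive point, and the step I expect to be the main obstacle, is closure under the bracket for $\textrm{GDer}$. Given $f\in\textrm{GDer}_{\a^k\b^l}(\R)$ with maps $(f',f'')$ and $g\in\textrm{GDer}_{\a^s\b^t}(\R)$ with maps $(g',g'')$, I claim $[f_\lambda g]\in\textrm{GDer}_{\a^{k+s}\b^{l+t}}(\R)[\lambda]$, with associated maps $[f'_\lambda g']$ and $[f''_\lambda g'']$. To prove it I would substitute $[f_\lambda g]_\mu=f_\lambda g_{\mu-\lambda}-(-1)^{|f||g|}g_{\mu-\lambda}f_\lambda$ into the left-hand side of \eqref{generalized derivation} at bidegree $(k+s,l+t)$, expand $[f''_\lambda g'']_\mu([a_\nu b])$ the same way, and then repeatedly apply \eqref{generalized derivation} for $g$ followed by \eqref{generalized derivation} for $f$, moving $\a^k\b^l$ and $\a^s\b^t$ past the maps via the commutation relations. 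Matching the two sides, the \emph{diagonal} contributions, where $f$ and $g$ act on the same argument, coincide, while the \emph{mixed} contributions, where they act on different arguments, cancel pairwise thanks to the supercommutator antisymmetry of \eqref{crochet de cend}; the whole difficulty is combinatorial, namely keeping the Koszul signs $(-1)^{|f||g|},(-1)^{|f||a|},(-1)^{|g||a|}$ and the three parameters $\lambda,\mu,\nu$ aligned so that these cancellations are exact. The quasiderivation case is the specialization $f'=f,\ g'=g$, for which $[f'_\lambda g']=[f_\lambda g]$ automatically, and the centroid case follows even more directly from the three-fold equality in \eqref{centroid}, which lets a centroid be transported across a bracket; in each case $[f_\lambda g]$ satisfies the relevant identity, proving (i).

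For (ii) it remains, given the inclusion $\textrm{ZDer}(\R)\subseteq\textrm{Der}(\R)$ and the $\a',\b'$-stability above, to establish $[\textrm{Der}(\R)_\lambda\textrm{ZDer}(\R)]\subseteq\textrm{ZDer}(\R)[\lambda]$; by skew-symmetry this yields the ideal property. Take $D\in\textrm{Der}_{\a^k\b^l}(\R)$ and $f\in\textrm{ZDer}_{\a^s\b^t}(\R)$ and verify the two vanishing conditions of \eqref{centralderivation} for $[D_\lambda f]_\mu=D_\lambda f_{\mu-\lambda}-(-1)^{|D||f|}f_{\mu-\lambda}D_\lambda$ at bidegree $(k+s,l+t)$. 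In $[D_\lambda f]_\mu([a_\nu b])$ the summand $D_\lambda f_{\mu-\lambda}([a_\nu b])$ vanishes because $f$ annihilates every bracket, and $f_{\mu-\lambda}D_\lambda([a_\nu b])$ vanishes after expanding $D_\lambda([a_\nu b])$ by the derivation rule into a sum of brackets, each again killed by $f$. In $[([D_\lambda f]_\mu(a))_{\mu+\nu}\a^{k+s}\b^{l+t}(b)]$ the $f_{\mu-\lambda}D_\lambda$-part is zero directly by the second identity of \eqref{centralderivation} for $f$, reading $\a^{k+s}\b^{l+t}(b)=\a^s\b^t(\a^k\b^l(b))$; for the $D_\lambda f_{\mu-\lambda}$-part I would apply the derivation $D_\lambda$ to the identically zero expression $[(f_{\mu-\lambda}(a))_\gamma\a^s\b^t(c)]$, whose Leibniz expansion gives exactly the desired term plus $(-1)^{|D|(|f|+|a|)}[(f_{\mu-\lambda}\a^k\b^l(a))_\gamma\a^s\b^t(D_\lambda c)]$, and this second term is again zero by \eqref{centralderivation} for $f$; hence the desired term vanishes too. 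Therefore $[D_\lambda f]\in\textrm{ZDer}_{\a^{k+s}\b^{l+t}}(\R)[\lambda]$, so $\textrm{ZDer}(\R)$ is a BiHom-Lie conformal ideal of $\textrm{Der}(\R)$, completing the proof.
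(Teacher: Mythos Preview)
Your plan is correct and matches the paper's proof essentially step for step: for (i) the paper also proves the $\textrm{GDer}$ case only, exhibiting $[f'_\lambda g']$ and $[f''_\lambda g'']$ as the companion maps of $[f_\lambda g]$ via the expansion you describe (equations \eqref{5-7}--\eqref{5-9}), and notes the $\a',\b'$-stability as obvious; for (ii) the paper carries out the same two vanishing checks, only with the roles reversed (it computes $[f_\lambda g]$ with $f\in\textrm{ZDer}$ and $g\in\textrm{Der}$ rather than your $[D_\lambda f]$), using the derivation rule for $g$ exactly where you apply $D_\lambda$ to the identically zero bracket.
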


\begin{proof}
(i)  We only prove that $GDer(\R)$ is a BiHom-conformal subalgebra of $\Omega$. The proof for the other two cases  can be done similarly.

For $f_\l \in GDer_{\a^k\b^l}(\R), g_\l \in GDer_{\a^s\b^t}(\R)$, $a,b\in \R$,
there exist $f^{'},f^{''}\in \Omega$  (resp. $g^{'},g^{''}\in \Omega$ ) such that Eq.\eqref{generalized derivation} holds for $f$ (resp. $g$).
We only need to show
\begin{eqnarray}\label{5-6}
[f^{''}_\lambda g^{''}]_\theta([a_\mu b])=[([f_\lambda g]_\theta(a))_{\mu+\theta} \a^{k+s}\b^{l+t}(b)]+(-1)^{(|f|+|g|)|a|}[\a^{k+s}\b^{l+t}(a)_\mu([f^{'}_\lambda g^{'}]_\theta(b))].
\end{eqnarray}
In fact, we have
\begin{align}
&[([f_\lambda g]_\theta(a))_{\mu+\theta}\a^{k+s}\b^{l+t}(b)]
=[(f_\lambda(g_{\theta-\lambda}(a)))_{\mu+\theta}\a^{k+s}\b^{l+t}(b)] \nonumber\\
& \hspace{3 cm} -(-1)^{|f||g|}[(g_{\theta-\lambda}(f_\lambda(a)))_{\mu+\theta}\a^{k+s}\b^{l+t}(b)]. \label{5-7}
\end{align}
On the other hand,
\begin{eqnarray}
&&[(f_\lambda(g_{\theta-\lambda}(a)))_{\mu+\theta}\a^{k+s}\b^{l+t}(b)]\nonumber\\
&=&f^{''}_\lambda([(g_{\theta-\lambda}(a))_{\mu+\theta-\lambda}\a^s\b^t(b)])-
(-1)^{|f|(|g|+|a|)}[\a^k\b^l(g_{\theta-\lambda}(a))_{\mu+\theta-\lambda}(f^{'}_\lambda(\a^s\b^t(b)))]\nonumber\\
&=&f^{''}_\lambda(g^{''}_{\theta-\lambda}([a_\mu b]))-(-1)^{|g||a|}f^{''}_\lambda([\a^s\b^t(a)_\mu (g^{'}_{\theta-\lambda}(b))])
\label{5-8} \\
&&-(-1)^{|f|(|g|+|a|)}g^{''}_{\theta-\lambda}([\a^s\b^t(a)_\mu(f^{'}_\lambda(b))])
+(-1)^{|f|(|g|+|a|)
+|g||a|}[\a^{k+s}\b^{l+t}(a)_\mu(g^{'}_{\theta-\lambda}(f^{'}_\lambda(b)))], \nonumber
\end{eqnarray}
and
\begin{eqnarray}
&&[(g_{\theta-\lambda}(f_\lambda(a)))_{\mu+\theta}\a^{k+s}\b^{l+t}(b)]\nonumber\\
&=&g^{''}_{\theta-\lambda}([(f_\lambda(a))_{\lambda+\mu}\a^k\b^l(b)])
-(-1)^{|g|(|f|+|a|)}[f_\lambda(\a^s\b^t(a))_{\lambda+\mu}(g^{'}_{\theta-\lambda}(\a^k\b^l(b)))]\nonumber\\
&=&g^{''}_{\theta-\lambda}(f^{''}_\lambda([a_\mu b]))-(-1)^{|f||a|}g^{''}_{\theta-\lambda}([\a^k\b^l(a)_\mu(f^{'}_\lambda(b))]) \label{5-9} \\
&&-(-1)^{|g|(|f|+|a|)}f^{''}_\lambda([\a^s\b^t(a)_\mu(g^{'}_{\theta-\lambda}(b))])
+(-1)^{|g|(|f|+|a|)
+|f||a|}[\a^{k+s}\b^{l+t}(a)_\mu f^{'}_\lambda(g^{'}_{\theta-\lambda}(b))]. \nonumber
\end{eqnarray}

Substituting Eqs.\eqref{5-8} and \eqref{5-9} into Eq.\eqref{5-7}, we obtain  Eq.\eqref{5-6}.
In addition, It is obvious that $\a'(f) \in \GDer(\R)$ and $\b'(f) \in \GDer(\R)$.
Hence $[f_\lambda g]\in\GDer(\R)[\lambda]$,
and $\GDer(\R)$ is a BiHom-conformal subalgebra of $\Omega$.

(ii) For $f\in\ZDer_{\a^k\b^l}(\R), g\in\Der_{\a^s\b^t}(\R)$, and $a, b \in\R$, we have
\begin{eqnarray*}
[f_\lambda g]_\theta([a_\mu b])
&=&f_\lambda(g_{\theta-\lambda}([a_\mu b]))-(-1)^{|f||g|}g_{\theta-\lambda}(f_\lambda([a_\mu b]))=f_\lambda(g_{\theta-\lambda}([a_\mu b]))\\
&=&f_\lambda([(g_{\theta-\lambda}(a))_{\mu+\theta-\lambda}\a^s\b^t(b)]
+(-1)^{|a||g|}[\a^s\b^t(a)_\mu (g_{\theta-\lambda}(b))])=0,
\end{eqnarray*}
and
\begin{eqnarray*}
{[[f_\lambda g]_\theta(a)_{\mu+\theta}\a^{k+s}\b^{l+t}(b)]}
&=&[(f_\lambda(g_{\theta-\lambda}(a))-
(-1)^{|f||g|}g_{\theta-\lambda}(f_\lambda(a)))_{\mu+\theta}\a^{k+s}\b^{l+t}(b)]\\
&=&[-(-1)^{|f||g|}(g_{\theta-\lambda}(f_\lambda a)))_{\mu+\theta}\a^{k+s}\b^{l+t}((b)]\\
&=&-(-1)^{|f||g|}g''_{\theta-\lambda}([f_{\lambda}(a)_{\lambda+\mu}\a^k\b^l(b)]) \\
&&+(-1)^{|g||a|}[f_\lambda (\a^s\b^t(a))_{\lambda+\mu}g'_{\theta-\lambda}(\a^k\b^t(b))]\\
&=&0.
\end{eqnarray*}
This means that
$[f_\lambda g] \in\ZDer(\R)[\lambda]$. Thus $\ZDer(\R)$ is an ideal of $\Der(\R)$.
\end{proof}

\begin{prop}
Let $f \in QC_{\a^k\b^l}(\R)$  and $g \in QC_{\a^s\b^l}(\R)$. Then $[f_\l g ]$ is an $\a^{k+s}\b^{l+t}$-generalized derivation of degree $|f|+|g|$.
\end{prop}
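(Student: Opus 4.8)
The plan is to exhibit explicit auxiliary maps $h',h''\in\Omega$ witnessing that $h:=[f_\lambda g]$ satisfies the defining identity \eqref{generalized derivation} of an $\a^{k+s}\b^{l+t}$-generalized derivation. (I read the hypothesis $g\in QC_{\a^s\b^l}(\R)$ as $g\in QC_{\a^s\b^t}(\R)$, which is what the target degree $\b^{l+t}$ forces.) My claim is that one may take $h''=0$ and $h'=[f_\lambda g]$ itself; concretely, I will show
\[
[([f_\lambda g]_\theta(a))_{\mu+\theta}\a^{k+s}\b^{l+t}(b)]+(-1)^{(|f|+|g|)|a|}[\a^{k+s}\b^{l+t}(a)_\mu([f_\lambda g]_\theta(b))]=0,
\]
which is exactly \eqref{generalized derivation} with $f''=0$. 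Since $[f_\lambda g]$ has degree $|f|+|g|$ and lies in $\Omega$ (it commutes with $\a,\b$ because $f,g$ do), both witnesses have the required common degree, $0$ being trivially homogeneous.

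The mechanism is a repeated use of the quasicentroid relation \eqref{quasicentroid}. First I expand $[f_\lambda g]_\theta(a)=f_\lambda(g_{\theta-\lambda}(a))-(-1)^{|f||g|}g_{\theta-\lambda}(f_\lambda(a))$ via \eqref{[f,g]} and insert it into the first bracket, splitting it into two summands $T_1,T_2$. In $T_1$ I apply \eqref{quasicentroid} for $f$ with first argument $g_{\theta-\lambda}(a)$ and second argument $\a^s\b^t(b)$ — writing $\a^{k+s}\b^{l+t}(b)=\a^k\b^l(\a^s\b^t(b))$ and $\mu+\theta=\lambda+(\mu+\theta-\lambda)$ to match the $\lambda$-shift — which slides $f$ into the second slot. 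Using that $f$ commutes with $\a,\b$, I then apply \eqref{quasicentroid} for $g$ to slide $g$ into the second slot as well. The summand $T_2$ is treated identically but with the roles reversed, applying $g$ first and then $f$. After these four reductions each of $T_1,T_2$ has the form $[\a^{k+s}\b^{l+t}(a)_\mu(\cdots)]$, where the inner operators are $(-1)^{|f||g|}g_{\theta-\lambda}f_\lambda$ and $f_\lambda g_{\theta-\lambda}$ respectively, carrying the common sign $(-1)^{(|f|+|g|)|a|}$.

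Finally I collect the two results, factor out $(-1)^{(|f|+|g|)|a|}$, and use the $\mathbb{C}$-bilinearity of the bracket in the second argument; the two inner operators recombine precisely into $-[f_\lambda g]_\theta(b)=-(f_\lambda g_{\theta-\lambda}(b)-(-1)^{|f||g|}g_{\theta-\lambda}f_\lambda(b))$. This yields the displayed vanishing identity, so $h=[f_\lambda g]$ is an $\a^{k+s}\b^{l+t}$-generalized derivation of degree $|f|+|g|$ with associated data $(h,h,0)$. The main obstacle is purely bookkeeping: keeping the Koszul signs consistent across the four successive applications of the skew quasicentroid identities in the $\mathbb{Z}_2$-graded setting, and verifying at each step that the powers of $\a$ and $\b$ line up — both of which rely on the commutativity of $\a$ and $\b$ with each other and with $f,g$, together with the relation $\a^k\b^l\a^s\b^t=\a^{k+s}\b^{l+t}$.
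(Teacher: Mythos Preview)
Your proposal is correct and follows essentially the same route as the paper: expand $[f_\lambda g]_\theta(a)$ via \eqref{[f,g]}, apply the quasicentroid identity \eqref{quasicentroid} twice to each of the two summands to push both operators into the second slot of the bracket, and recombine. The paper records the resulting identity as
\[
[([f_\lambda g]_\theta a)_{\mu+\theta}\alpha^{k+s}\beta^{l+t}(b)]+(-1)^{(|f|+|g|)|a|}[\alpha^{k+s}\beta^{l+t}(a)_\mu\,(-1)^{|f||g|}[f_\lambda g]_\theta(b)]=0,
\]
so its witnessing pair is $(h',h'')=\bigl((-1)^{|f||g|}[f_\lambda g],\,0\bigr)$, whereas your sign bookkeeping gives $(h',h'')=\bigl([f_\lambda g],\,0\bigr)$; either choice lies in $\Omega$ with the right degree, so both certify the generalized-derivation property. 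One small slip in your write-up: after factoring out $(-1)^{(|f|+|g|)|a|}$ the inner operator coming from $T_2$ should carry a minus sign (it is $-f_\lambda g_{\theta-\lambda}$, not $f_\lambda g_{\theta-\lambda}$), which is exactly what is needed for your stated recombination into $-[f_\lambda g]_\theta(b)$ to hold.
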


\begin{proof}
Assume that  $f \in QC_{\a^k\b^l}(\R)$  and $g \in QC_{\a^s\b^l}(\R)$. Then
for any homogenous elements  $a,b \in \R$, we have
\begin{align*}
[(f_\lambda(a))_{\lambda+\mu}\a^k\b^l(b)]=(-1)^{|f||a|}[\a^k\b^l(a)_\mu(f_\lambda(b))]
\end{align*}
and
\begin{align*}
[(g_\lambda(a))_{\lambda+\mu}\a^s\b^t(b)]=(-1)^{|g||a|}[\a^s\b^t(a)_\mu(g_\lambda(b))].
\end{align*}
Hence
\begin{align*}
[[f_\l g]_\theta (a)_{\mu+\theta} \a^{k+s}\b^{l+t}(b)] = &
[f_\l(g_{\theta-\l}(a))_{\mu+\theta}  \a^{k+s}\b^{l+t}(b)]  \\
&-(-1)^{|f||g|}[g_{\theta-\l}(f_\l(a))_{\mu+\theta}  \a^{k+s}\b^{l+t}(b)]  \\
=&(-1)^{(|f|+|g|)|a|}[ \a^{k+s}\b^{l+t}(a)_\mu g_{\theta-\l}(f_\l(b))] \\
 &-(-1)^{(|f|+|g|)|a|}(-1)^{|f||g|} [ \a^{k+s}\b^{l+t}(a)_\mu f_\l(g_{\theta-\l}(b))] \\
 =&-(-1)^{(|f|+|g|)|a|} [ \a^{k+s}\b^{l+t}(a)_\mu (-1)^{|f||g|}[f_\l g]_\theta (b)].
\end{align*}
Which implies that
\begin{align*}
 [[f_\l g]_\theta (a)_{\mu+\theta} \a^{k+s}\b^{l+t}(b)] + (-1)^{(|f|+|g|)|a|} [ \a^{k+s}\b^{l+t}(a)_\mu (-1)^{|f||g|}[f_\l g]_\theta (b)]=0.
\end{align*}
Then $[f_\l g ]$ is an $\a^{k+s}\b^{l+t}$-generalized derivation of degree $|f|+|g|$.
\end{proof}

\begin{prop}
Let $(\R,\a,\b)$ be a BiHom-Lie conformal superalgebra. If $f \in \QDer_{\a^k\b^l} (\R)$ and $g \in \QC_{\a^k\b^l}(\R)$ such that $|f|=|g|$, then $f+g \in \GDer_{\a^k\b^l}(\R)$ with degree $|f|$.
\end{prop}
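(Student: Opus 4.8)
The plan is to produce explicit witnesses $(f+g)'$ and $(f+g)''$ in $\Omega$ exhibiting $f+g$ as an $\a^k\b^l$-generalized derivation. First I would record the two hypotheses in the stated form: since $f\in\QDer_{\a^k\b^l}(\R)$ there is some $f'\in\Omega$ with $|f'|=|f|$ such that
\begin{equation*}
[(f_\l(a))_{\l+\mu}\a^k\b^l(b)]+(-1)^{|f||a|}[\a^k\b^l(a)_\mu(f_\l(b))]=f'_\l([a_\mu b]),
\end{equation*}
and since $g\in\QC_{\a^k\b^l}(\R)$ we have
\begin{equation*}
[(g_\l(a))_{\l+\mu}\a^k\b^l(b)]=(-1)^{|g||a|}[\a^k\b^l(a)_\mu(g_\l(b))].
\end{equation*}
Because $|f|=|g|=:\theta$, the sum $f+g$ is homogeneous of degree $\theta$ and, as $\Omega$ is a $\mathbb{C}$-subspace, both $f-g$ and $f'$ lie in $\Omega$ with degree $\theta$.

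The decisive step is the choice of auxiliary maps: I would set $(f+g)''=f'$ and $(f+g)'=f-g$. With these choices the verification is a direct expansion. Expanding $[((f+g)_\l(a))_{\l+\mu}\a^k\b^l(b)]$ by linearity of the bracket and adding $(-1)^{\theta|a|}[\a^k\b^l(a)_\mu((f-g)_\l(b))]$, I would regroup the four resulting terms into the ``$f$-block''
\begin{equation*}
[(f_\l(a))_{\l+\mu}\a^k\b^l(b)]+(-1)^{\theta|a|}[\a^k\b^l(a)_\mu(f_\l(b))]
\end{equation*}
and the ``$g$-block''
\begin{equation*}
[(g_\l(a))_{\l+\mu}\a^k\b^l(b)]-(-1)^{\theta|a|}[\a^k\b^l(a)_\mu(g_\l(b))].
\end{equation*}
By the quasiderivation identity the $f$-block equals $f'_\l([a_\mu b])$, and by the quasicentroid identity the $g$-block vanishes. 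Hence the whole expression equals $f'_\l([a_\mu b])=(f+g)''_\l([a_\mu b])$, which is exactly the defining identity \eqref{generalized derivation} for $f+g$, with $k$ and $l$ unchanged.

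There is no real analytic obstacle here; the only thing to get right is the sign bookkeeping and the observation that one must take $f-g$ (rather than $f+g$ or $g$) as the companion map $(f+g)'$, so that the quasicentroid relation produces a cancellation rather than a spurious extra term. I would close by noting that $f'$ and $f-g$ satisfy the degree requirement $|(f+g)'|=|(f+g)''|=|f+g|=\theta$, so that $f+g\in\GDer_{\a^k\b^l}(\R)$ with degree $|f|$, as claimed.
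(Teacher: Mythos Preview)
Your proof is correct and follows essentially the same approach as the paper: both arguments choose the witnesses $(f+g)'=f-g$ and $(f+g)''=f'$, then verify the generalized derivation identity by splitting into an $f$-part (handled by the quasiderivation relation) and a $g$-part (which cancels via the quasicentroid relation). The only cosmetic difference is that you add the $(f-g)$ term at the outset and show the sum equals $f'_\l([a_\mu b])$, whereas the paper computes $[(f+g)_\l(a)_{\l+\mu}\a^k\b^l(b)]$ first and then reads off the companion maps.
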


\begin{proof}
Let  $f_\l \in\QDer_{\a^k\b^l}(\R)$. Then  there exist $f^{'} \in \Omega$ such that
\begin{eqnarray}\label{5-10}
[(f_\lambda(a))_{\lambda+\mu}\a^k\b^l(b)]
+(-1)^{|f||a|}[\a^k\b^l(a)_\mu(f_\lambda(b))]=f^{'}_\lambda([a_\mu b]), \forall \, a,b\in\R.
\end{eqnarray}
And let $g \in \QC_{\a^k\b^l}(\R)$. Then
\begin{eqnarray}\label{quasicentroid}
[(g_\lambda(a))_{\lambda+\mu}\a^k\b^l(b)]=(-1)^{|f||a|}[\a^k\b^l(a)_\mu(g_\lambda(b))], \ \forall \ a,b\in\R.
\end{eqnarray}
We will  prove  that $f+g$ satisfies  Eq. \eqref{generalized derivation}.  In fact,
\begin{align*}
 [(f+g)_\l(a)_{\l+\mu} \a^k\b^l(b)]
 =& [f_\l(a)_{\l+\mu} \a^k\b^l(b)]+ [g_\l(a)_{\l+\mu} \a^k\b^l(b)] \\
 =&f'_\l([a_\mu b]) -(-1)^{|f||a|}[\a^k\b^l(a)_\mu f_\l (b)] \\
 &+  (-1)^{|f||a|}[\a^k\b^l(a)_\mu g_\l (b)] \\
 =&f'_\l([a_\mu b]) - (-1)^{|f||a|}[\a^k\b^l(a)_\mu (f-g)_\l (b)].
\end{align*}
Then
\begin{align*}
 [(f+g)_\l(a)_{\l+\mu} \a^k\b^l(b)] +(-1)^{|f||a|}[\a^k\b^l(a)_\mu (f-g)_\l (b)]
 =f'_\l([a_\mu b]) .
\end{align*}
Therefore $f+g$ satisfies  Eq. \eqref{generalized derivation}. This completes the proof.
\end{proof}

\renewcommand{\refname}{References}
%%%%%%%%%%%%%%%%%%%%%%%%%%%%%%%%%%%%%%%%%%%

\end{document}